
\documentclass[10pt, oneside]{amsart}

\usepackage[hscale = 0.8, top=0.8in]{geometry}

 \usepackage{float}
 \usepackage{amsmath}      
 \usepackage{amsthm}        
 \usepackage{amssymb}       
 \usepackage{empheq}       
 \usepackage{stackrel}
 \usepackage{braket}
 \usepackage[all]{xy} 
 \usepackage{hyperref}
 \hypersetup{
  colorlinks   = true,          
  urlcolor     = pink,          
  linkcolor    = red, 
  citecolor   = blue            
}

\usepackage{tikz}
\usetikzlibrary{matrix}
\usetikzlibrary{shapes}
\usetikzlibrary{arrows}
\usetikzlibrary{calc,3d}
\usetikzlibrary{decorations,decorations.pathmorphing}
\usetikzlibrary{through}
\tikzset{ext/.style={circle, draw,inner sep=1pt},int/.style={circle,draw,fill,inner sep=2pt},nil/.style={inner sep=1pt}}
\tikzset{exte/.style={circle, draw,inner sep=3pt},inte/.style={circle,draw,fill,inner sep=3pt}}
\tikzset{diagram/.style={matrix of math nodes, row sep=3em, column sep=2.5em, text height=1.5ex, text depth=0.25ex}}
\tikzset{diagram2/.style={matrix of math nodes, row sep=0.5em, column sep=0.5em, text height=1.5ex, text depth=0.25ex}}

 \renewcommand{\theequation}{\thesection.\arabic{equation}}         
 \makeatletter                                                      
    \@addtoreset{equation}{section} 
    \@addtoreset{footnote}{section}

 \def\newprooflikeenvironment#1#2#3#4{%
      \newenvironment{#1}[1][]{%
          \refstepcounter{equation}                                 
          \begin{proof}[{\rm\csname#4\endcsname{#2~\theequation}   
          \@ifnotempty{##1}{\the\thm@notefont \ (##1)}\csname#4\endcsname{.}}]
          \def\qedsymbol{#3}}%
         {\end{proof}}}                                             
 \makeatother                                                       
                                                                    
 \newprooflikeenvironment{definition}{Definition}{$\diamond$}{textbf}
 \newprooflikeenvironment{example}{Example}{$\diamond$}{textbf}     
 \newprooflikeenvironment{remark}{Remark}{$\diamond$}{textbf}       
 \newprooflikeenvironment{notation}{Notaion}{$\diamond$}{textbf}    
                                                                    
 \theoremstyle{plain}                                               
 \newtheorem{theorem}[equation]{Theorem}                            
                                          
 \newtheorem*{lemma*}{Lemma}                                        
 \newtheorem*{theorem*}{Theorem}                                    
                             
 \newtheorem*{proposition*}{Proposition}                            
 \newtheorem{lemma}[equation]{Lemma}                                
 \newtheorem{corollary}[equation]{Corollary}                        
 \newtheorem{proposition}[equation]{Proposition}                    
 \newtheorem{conjecture}[equation]{Conjecture}

 \newcommand{\alg}{\mathrm{Alg}}
 \newcommand{\Alg}{\mathcal{A}\text{\sf lg}}
 \newcommand{\ass}{\mathrm{Ass}}
 \newcommand{\br}{\mathrm{Br}}
 \newcommand{\C}{\mathcal{C}}
 
 \newcommand{\comm}{\mathrm{Comm}}
 \newcommand{\coalg}{\mathrm{CoAlg}}
 \newcommand{\ccomm}{\mathrm{coComm}}
 \newcommand{\coE}{\mathrm{co}\mathbb{E}}
 \newcommand{\colie}{\mathrm{coLie}}
 \newcommand{\ccolie}{\mathrm{coLie}^\theta}
 \newcommand{\conv}{\mathrm{Conv}}
 
 \newcommand{\dg}{\mathrm{dg}}
 \newcommand{\Dg}{\mathbf{dg}}
 \newcommand{\define}{\text{def}}
 \newcommand{\der}{\text{Der}}
 
 \DeclareMathOperator{\en}{End}
 \newcommand{\f}{\mathrm{F}}
 \newcommand{\F}{\mathcal{F}}
 
 \newcommand{\free}{\mathrm{free}}
 \newcommand{\Free}{\mathbf{free}}
 \newcommand{\gr}{\mathrm{gr}}
 \DeclareMathOperator{\harr}{Harr}

 \newcommand{\Ind}{\mathbf{ind}}
 \newcommand{\ind}{\mathrm{ind}}

 \newcommand{\La}{\Lambda}
 \newcommand{\lie}{\mathrm{Lie}}

 \newcommand{\map}{\mathrm{Map}}

 \renewcommand{\nu}{\mathrm{nu}}
 \renewcommand{\O}{\mathcal{O}}
 \newcommand{\oblv}{\mathrm{oblv}}
 \newcommand{\Oblv}{\mathbf{oblv}}
 
 \newcommand{\Om}{\Omega}
 \newcommand{\omp}{{\Omega(\mathrm{co}\mathbb{P}_2\{2\})}}
 \newcommand{\op}{\mathrm{Op}}

 \renewcommand{\P}{\mathcal{P}}
 \newcommand{\PP}{\mathbb{P}}

 \newcommand{\pol}{\mathrm{Pol}}
 \newcommand{\cpol}{\widehat{\mathrm{Pol}}}
 \newcommand{\Pol}{\mathcal{P}\mathrm{ol}}
 \newcommand{\pb}{\rule{.4pt}{5.4pt}\rule[5pt]{5pt}{.4pt}\llap{\hspace{1pt}}}
 
 \newcommand{\prelie}{\mathrm{preLie}}

 \DeclareMathOperator{\rk}{rk}
 
 \newcommand{\sgn}{\mathrm{sgn}}
 \newcommand{\s}{\sigma}
 \newcommand{\sh}{\mathrm{Sh}}
 
 \DeclareMathOperator{\spec}{Spec}

 \newcommand{\sym}{\mathrm{Sym}}
 
 \newcommand{\T}{\mathcal{T}}
 \newcommand{\TT}{\mathbb{T}}
 \newcommand{\triv}{\mathrm{triv}}
 \newcommand{\Triv}{\mathbf{triv}}
 \DeclareMathOperator{\uhom}{\underline{Hom}}

 \newcommand{\coP}{\mathrm{co}\PP}
 \newcommand{\ccoP}{\mathrm{co}\PP^\theta}
 \newcommand{\cucoP}{\mathrm{co}\PP^{\text{cu}}}
 \newcommand{\cuccomm}{\mathrm{coComm}^\mathrm{cu}}
 \newcommand{\tP}{{\widetilde{\mathbb{P}^\mathrm{nu}_2}}}

 \newcommand{\inveq}{\mathrel{\rotatebox[origin=c]{90}{$=$}}}
 
 \newcommand{\invpb}{\mathrel{\rotatebox[origin=c]{270}{$\pb$}}}

 \newcommand{\invertin}{\mathrel{\rotatebox[origin=c]{90}{$\in$}}}


\title{Degenerate Poisson algebras and derived Poisson degeneracy loci}
\author{Grigorii Konovalov}
\keywords{operads, Poisson structures, Poisson degeneracy loci}
\thanks{The author is a research intern at the Center for Fundamental Mathematics, grant number FSMG-2023-0013, MIPT, Moscow; and a PhD 
student at Skoltech and at the HSE University, which are also located in Moscow}
\date{March 2023}
\address{\quad}
\email{grisha.v.konovalov@gmail.com}

\begin{document}

\begin{abstract}
This paper originated as an attempt to answer a question: what are the natural derived structures on Poisson degeneracy loci?
We argue that the question could be possibly answered via a construction of differential graded operads that
``naturally'' act on the degeneracy 
loci. For each $m \ge 0$, we suggest what looks like a reasonable condition for a Poisson structure on a commutative differential
graded algebra to be $m$-degenerate, i.e. to ``have rank $\le 2m$''. That condition will turn out to be a universal property of
the operad that controls such Poisson algebras; we denote that operad $\PP_1^{\le m}$. We prove that the operad $\PP_1^{\le m}$ does in 
fact exist, and we write an explicit simplicial resolution of it. The latter, in particular, will allow us to show that
$\PP_1^{\le m}$ sits in non-positive cohomological degrees and to compute $H^0(\PP_1^{\le m})$.
\end{abstract}

\maketitle


\section{Introduction}

Let $X$ be a smooth algebraic variety over a base field $k$, $\mathrm{char}(k) = 0$,
equipped with an algebraic Poisson structure, which is given by a Poisson bivector field
\begin{equation*}
\pi_X \in \Gamma(X, \La^2 T(X)) \:.
\end{equation*}
Among Poisson subschemes in
$(X, \pi_X)$ degeneracy loci are distinguished ones. They could be defined as follows.
\begin{definition}\label{def1}
As a subset in $X$, the m-degeneracy locus
\begin{equation*}
D_{m}(X, \pi_X) = \left\{ x \in X : \rk (\pi^\sharp_{X, x} \colon T^*_x(X) \to T_x(X)) \le 2m \right\}
\end{equation*}
consists of points where rank of $\pi_X$ is not greater than $2m$. (Note that $\pi_X$ is a skew field, so its rank is always even.)
Moreover, as a subscheme in $X$, $D_{m}(X, \pi_X)$ is defined as the vanishing locus of the section
\begin{equation*}
\pi_X^{m+1} \equiv \underbrace{\pi_X \wedge \ldots \wedge \pi_X}_{m+1} \in \Gamma(X, \La^{2m+2}T(X)) \:.
\end{equation*}
It is easily seen that the Poisson structure restricts on $D_m(X, \pi_X)$ for each $m$. The $0$-degeneracy locus $D_0(X, \pi_X)$ is 
also sometimes called Poisson vanishing locus; the restriction of the Poisson structure to the vanishing locus is trivial.
\end{definition}

The geometry of Poisson degeneracy loci was extensively studied in a number of works, including \cite{Bondal}, \cite{Pol}, \cite{PG}.
This paper aims towards contributing to this study from the perspective of homotopical algebra and derived algebraic geometry, 
motivation for which was provided
by a remarkable conjecture, made by A. Bondal, claiming that the degeneracy loci are especially large in some cases.
\begin{conjecture}[A. Bondal, \cite{Bondal}]\label{conj-bondal}
Let $X$ be a complex smooth connected Fano variety, equipped with a Poisson structure $\pi_X$.
Then, for each integer $k \ge 0$ such that $2k < \mathrm{dim}(X)$, degeneracy locus $D_{2k}(X, \pi_X)$ has a component of
dimension $\ge 2k+1$.
\end{conjecture}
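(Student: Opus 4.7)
The plan is to attack the conjecture through a derived refinement of $D_{2k}(X, \pi_X)$ built from the operad $\PP_1^{\le 2k}$ whose existence and explicit simplicial resolution are (to be) established in this paper. Since $\PP_1^{\le 2k}$ universally controls Poisson algebras of rank at most $4k$, the sheaf of $\PP_1^{\le 2k}$-algebra structures on $\mathcal{O}_X$ refining the given Poisson structure $\pi_X$ carves out a derived subscheme $\mathbb{D}_{2k}(X, \pi_X)$ of $X$ whose classical truncation is exactly $D_{2k}(X, \pi_X)$, and whose cotangent complex relative to $X$ admits an explicit description by bundles of the form $\Lambda^\bullet T_X$, with the naive term $\Lambda^{4k+2} T_X$ corresponding to the section $\pi_X^{2k+1}$ and additional corrections concentrated in non-positive cohomological degrees.

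First I would extract the virtual dimension of $\mathbb{D}_{2k}(X, \pi_X)$ from the simplicial resolution of $\PP_1^{\le 2k}$, via a rank computation in the associated virtual tangent complex. The fact that the resolution is concentrated in non-positive degrees means the higher derived contributions add rank to the virtual tangent space, so one can hope that the combinatorics of $H^0(\PP_1^{\le 2k})$ force the inequality $\mathrm{vdim}\, \mathbb{D}_{2k}(X, \pi_X) \ge 2k+1$ whenever $4k < \dim X$. This is the purely operadic step, in principle reducible to a weight count using the explicit formulas for the resolution.

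Second, I would upgrade the virtual bound to a genuine bound using positivity of $T_X$ furnished by the Fano condition. Here one should adapt Fulton--Lazarsfeld-style arguments for degeneracy loci of sections of positive bundles, noting additionally that the Poisson structure restricts to each degeneracy locus: if a symplectic leaf of generic rank $4k$ meets a component of $D_{2k}(X, \pi_X)$ transversally, it contributes $4k$ dimensions of leaf plus at least one transverse direction in $X$, already giving $\ge 4k + 1 \ge 2k+1$; the harder case is when the leaf stratification collapses on the component. The positivity of $-K_X$ should be used precisely to rule out such collapse by controlling the negativity of the conormal sheaf.

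The main obstacle, as with almost all derived degeneracy-locus arguments, is bridging the gap between virtual and actual dimension: the truncation $\pi_0 \mathbb{D}_{2k}(X, \pi_X)$ may a priori have components of smaller dimension than $\mathrm{vdim}\, \mathbb{D}_{2k}$, and this excess is not detected by $\PP_1^{\le 2k}$ alone. I would expect the most delicate piece of the argument to be a specialization scheme, reducing the Fano case to a family of toric or flag-variety degenerations where the bound can be checked explicitly, combined with semi-continuity of virtual dimension in families to propagate the estimate back. Absent such a reduction, the operadic framework gives only the correct \emph{expected} answer, and the global geometric input from the Fano hypothesis remains the essential and most difficult ingredient.
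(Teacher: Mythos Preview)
The statement you are attempting to prove is stated in the paper as a \emph{conjecture}, not a theorem; the paper does not prove it and does not claim to. Bondal's Conjecture is invoked only as motivation for introducing the derived degeneracy loci, and the paper explicitly notes that it is known only in dimensions $3$ and $4$ by other methods. There is therefore no ``paper's own proof'' to compare your proposal against.

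As for the proposal itself, it is not a proof but a research outline, and you essentially concede this in your final paragraph. The genuine gap is the one you name: passing from a virtual-dimension estimate for $\mathbb{D}_{2k}(X,\pi_X)$ to an actual-dimension bound for a component of its classical truncation. Nothing in the operadic machinery of the paper addresses this; the resolution of $\PP_1^{\le 2k}$ tells you about the expected codimension of the derived locus, but excess intersection is precisely what one anticipates here (indeed, the non-transversality of the degeneracy condition is the stated reason for passing to the derived setting in the first place). Your proposed remedies --- a Fulton--Lazarsfeld positivity argument, or a degeneration to toric/flag cases with semicontinuity --- are plausible directions but are not carried out, and each would be a substantial piece of work in its own right. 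In particular, the leaf-dimension count you sketch (``$4k$ dimensions of leaf plus one transverse direction'') already assumes the component in question contains a point where the rank is exactly $4k$, which need not hold a priori. Until the virtual-to-actual step is made precise, the proposal does not constitute a proof.
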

It is worth noting that the literature contains some evidence favoring Bondal's Conjecture. In particular, it was proved in 
\cite{Bondal} for $X$ of dimension $3$ and in \cite{PG} for $X$ of dimension $4$.

This suggests that the defining property of the degeneracy locus is somewhat non-transversal, which might mean that there is a
meaningful derived analog of the notion of Poisson degeneracy locus that has better transversality, as it usually happens when
we replace classical notions with their derived versions. From this geometric perspective, the main accomplishment of this paper
is a definition of derived Poisson degeneracy loci as derived Poisson subschemes in $X$ in such a way that they refine the
classical degeneracy loci, see Definition \ref{der-degen-loci-def} and Proposition \ref{der-degen-loci-prop}. In other words,
this paper aims at extending the notion of Poisson degeneracy loci to the field of derived Poisson geometry, foundations for
which were established in a number of works -- \cite{PTVV}, \cite{Melani}, \cite{CPTVV}, \cite{Melani_2018}, \cite{Melani_20181},
to name a few.

Our methodology extensively relies on the theory of algebraic operads. Below, we would like to sketch the main ideas
that will form a core of our constructions.

Let us consider the case of the vanishing locus first.
Let $A$ be a Poisson algebra over the base field $k$.
One sees that the vanishing locus $D_0 \subset \spec(A)$ of the Poisson scheme $\spec(A)$ is 
given by $\spec(A/(\{A, A\})) \subset \spec(A)$, where $(\{A,A\}) \subset A$ denotes the ideal in $A$ generated by the brackets.
In other terms, we consider the Poisson algebra $A$ as an algebra over the Poisson operad
$\PP_1$. Then the commutative algebra $A/(\{A, A\})$ can be identified with the induction of $A$ along the morphism of operads
\begin{equation}\label{augmentation}
\PP_1 \longrightarrow \comm
\end{equation}
which maps the bracket to zero. Moreover, the factor map
\begin{equation*}
A \longrightarrow A/(\{A, A\}) \:,
\end{equation*}
which corresponds to the embedding $D_0 \subset \spec(A)$, can be identified as the unit map
\begin{equation*}
A \longrightarrow \text{oblv}_\comm^{\PP_1}(\text{ind}^\comm_{\PP_1}(A))
\end{equation*}
of the adjuntion
\begin{equation*}
\text{ind}_{\PP_1}^\comm \colon \alg_{\PP_1} \rightleftarrows \alg_\comm : \text{oblv}^{\PP_1}_\comm
\end{equation*}
which corresponds to the morphism of operads (\ref{augmentation}).

This suggests a definition for the derived Poisson vanishing locus as the derived induction along the map (\ref{augmentation}).
Moreover, this suggests the following plan for defining the derived Poisson degeneracy loci: first, for each $m \ge 1$,
define an operad $\PP_1^{\le m}$ together with a morphism
\begin{equation}\label{quasiAugmentation}
\PP_1 \longrightarrow \PP_1^{\le m}.
\end{equation}
Then, by definition, taking the derived m-th degeneracy locus of a Poisson algebra corresponds to taking the derived induction along
the morphism of operads (\ref{quasiAugmentation}). Heuristically, one should think of the operad $\PP_1^{\le m}$ as of an operad whose
algebras are Poisson algebras with the bracket having rank $\le 2m$. In addition, the forgetful functor corresponding to the morphism
(\ref{quasiAugmentation}) forgets this degeneracy datum.

It will be handy for us to construct $\PP_1^{\le m}$ by a universal property, which will be similar to the universal property of 
$\PP_1$ studied by V. Melani, \cite{Melani}. Melani constructed a functor
\begin{equation}\label{multi-derivations}
\mathcal{MD} \colon {\op}_{\comm/} \longrightarrow \lie^\text{gr}_k 
\end{equation}
(we let ourselves slightly simplify Melani's notation because,
at the moment, we only need to communicate a small portion of the information the original notation stores) that
takes an operad $\O$, equipped with a map $\mu \colon \comm \to \O$, to the graded complex of anti-symmetric operations of $\O$ that
are multi-derivations with respect to $\mu$. That complex was equipped with a Lie bracket, which was a restriction of the convolution
Lie braket of $\conv(\ccomm, \O\{-1\})$. In \cite{Melani}, the following characterization of $\PP_1$ was proved.
\begin{theorem*}[Theorem 2.11, \cite{Melani}]
The fiber at $\mu$ of the map of simplicial sets
\begin{equation*}
\uhom_\op (Q(\PP_1), \O) \longrightarrow \uhom_\op (\comm, \O)
\end{equation*}
is isomorphic to
\begin{equation*}
\uhom_{\lie_k^\mathrm{gr}} (Q(k(2)[-1]), \mathcal{MD}(\O)) \simeq \map_{\lie_k^\mathrm{gr}} (k(2)[-1], \mathcal{MD}(\O)) \:,
\end{equation*}
where $k(2)[-1]$ is trivial graded Lie albegra sitting in weight degree 2 and cohomological degree 1, $Q(k(2)[-1])$ and $Q(\PP_1)$ are
some nice resolutions of $k(2)[-1]$ and $\PP_1$ respectively chosen somewhat consistently with respect to each other.
\end{theorem*}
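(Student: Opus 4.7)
My plan is to identify both sides with sets of Maurer--Cartan elements in appropriate convolution dg Lie algebras and match them under Koszul duality for $\PP_1$.

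\textbf{Step 1: Koszul resolution and convolution.} Since $\PP_1$ is Koszul with Koszul dual cooperad $\coP_1\{1\}$, take the cofibrant replacement $Q(\PP_1)$ to be the operadic cobar construction $\Om(\coP_1\{1\})$. Standard bar--cobar theory for operads then supplies a natural isomorphism
$$\uhom_\op\bigl(Q(\PP_1),\O\bigr) \;\cong\; \mc\bigl(\conv(\coP_1\{1\},\O)\bigr),$$
under which the restriction map to $\uhom_\op(\comm,\O)$ becomes the projection sending a Maurer--Cartan element to its purely commutative component, relative to the grading on $\coP_1\{1\}$ that counts Lie cogenerators.

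\textbf{Step 2: PBW decomposition and $\md(\O)$.} Use the PBW-type isomorphism of $\SS$-modules $\coP_1\{1\} \cong \ccomm \circ \colie\{1\}$ to split any MC element as $\alpha + \beta$, with $\alpha$ in the commutative component and $\beta$ in strictly positive Lie weight. Fixing $\alpha = \mu$ reduces the full MC equation to one for $\beta$ inside a $\mu$-twisted sub-dg Lie algebra of $\conv(\coP_1\{1\},\O)$. The substantive content is to identify this sub-dg Lie algebra canonically with $\md(\O)$: the cooperadic compatibility between the commutative and Lie cogenerators in $\coP_1\{1\}$ forces $\beta$ to consist of multi-derivations with respect to $\mu$; the $\mu$-twisted differential coincides with the differential in the definition of $\md(\O)$; the restricted convolution bracket coincides with the Lie bracket on multi-derivations appearing in $\md(\O)$; and the Lie-cogenerator weight on $\coP_1\{1\}$ matches the weight grading on $\md(\O)$.

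\textbf{Step 3: Trivial graded Lie algebra target.} A Maurer--Cartan element of cohomological degree $1$ and weight $2$ in a graded dg Lie algebra is tautologically a morphism out of the trivial one-dimensional graded Lie algebra $k(2)[-1]$, and passing to a cofibrant replacement $Q(k(2)[-1])$ yields the simplicial enhancement $\uhom_{\lie_k^{\gr}}(Q(k(2)[-1]), \md(\O))$, equivalent to $\map_{\lie_k^{\gr}}(k(2)[-1], \md(\O))$. Combining Steps 1--3 produces the claimed identification.

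The main obstacle is Step 2: the identification of the $\mu$-twisted convolution sub-dg Lie algebra with $\md(\O)$ requires a careful unwinding of the Koszul self-duality of $\PP_1$ together with the explicit description of the multi-derivation complex and its Lie bracket. Steps 1 and 3 are formal consequences of operadic bar--cobar theory and of the mapping-space interpretation of MC elements, respectively.
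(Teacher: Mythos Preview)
The paper does not prove this theorem; it is quoted in the Introduction as Theorem~2.11 of \cite{Melani} and used as motivation for the constructions of Section~\ref{sec4}. Consequently there is no ``paper's own proof'' to compare your proposal against. That said, your outline is essentially the strategy of Melani's original argument: identify maps out of a cobar-type resolution with Maurer--Cartan elements in a convolution dg~Lie algebra, split off the commutative component using the distributive law $\coP_1\{1\}\cong\ccomm\{1\}\circ\colie\{1\}$, and interpret the residual twisted MC equation as an MC equation in the multi-derivation Lie algebra.

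Two points deserve care. First, in Step~1 you take $Q(\PP_1)=\Omega(\coP_1\{1\})$, but this resolves the \emph{non-unital} Poisson operad $\PP_1^{\mathrm{nu}}$; since the map you are fibering over lands in $\uhom_\op(\comm,\O)$ with $\comm$ unital, you must either pass to the curved Koszul dual $\ccoP_1\{1\}$ (cf.\ Section~\ref{subsec23}) or, as Melani in effect does, build $Q(\PP_1)$ explicitly as a quasi-free operad \emph{under} $\comm$ so that the projection to $\uhom_\op(\comm,\O)$ is literally restriction along the structure map. Second, your Step~2 phrasing ``forces $\beta$ to consist of multi-derivations'' is slightly misleading: the MC equation in the full convolution algebra does not by itself restrict $\beta$ to the multi-derivation subcomplex. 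What actually happens is that, after fixing $\alpha=\mu$, the positively Lie-weighted part of the $\mu$-twisted convolution algebra is \emph{identified as a graded dg~Lie algebra} with $\mathcal{MD}(\O)$ (this is exactly the content of Melani's construction, and is where the ``consistency'' of the resolutions $Q(\PP_1)$ and $Q(k(2)[-1])$ enters). With those two clarifications, your sketch matches the cited argument.
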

\begin{example}\label{intro-ex1}
Let $A$ be a cofibrant commutative differential graded algebra. In this case the endomorphism operad $\en_A$ is equipped with a map
$\comm \to \en_A$ that corresponds to the commutative structure on $A$, and, as was shown in \cite{Melani},
application of $\mathcal{MD}(-)$ to the object $\en_A \in \op_{\comm/}$ yields the graded Lie algebra of polyvector fields
\begin{equation*}
\bigoplus_{n \ge 0} \sym_A^n (\TT_A[-1]) [1] \:,
\end{equation*}
whose Lie bracket is given by the Schouten bracket and whose grading is given by the symmteric powers.
So in this case Melani's Theorem tells us that Poisson structures on $A$ are in a bijection to the set of maps
\begin{equation*}
k(2)[-1] \longrightarrow \bigoplus_{n \ge 0} \sym_A^n (\TT_A[-1]) [1]
\end{equation*}
in the homotopy category of graded dg Lie algebras. Among other things, any such map induces a map of complexes
\begin{equation*}
k[-1] \longrightarrow \sym^2_A(\TT_A[-1])[1] \:,
\end{equation*}
which gives the underlying bivector of the Poisson structure.
\end{example}
In addition, we would like to note that $\mathcal{MD}(\O)[-1]$ could be equipped with a commutative product, which is 
compatible with the Lie bracket. This makes $\mathcal{MD}(\O)[-1]$ into a graded $\PP_2$-algebra; therefore, we can write
\begin{equation*}
\map_{\lie_k^\mathrm{gr}} (k(2)[-1], \mathcal{MD}(\O)) \simeq \map_{\PP_2^\mathrm{gr}} (k[x], \mathcal{MD}(\O)[-1]) \:,
\end{equation*}
where $k[x]$ is the free graded $\PP_2$-algebra generated by the trivial Lie algebra $k(2)[-1]$ -- so the generator $x$ has weight 
degree 2
and cohomological degree 2. It turns out that the product on $\mathcal{MD}(\O)[-1]$ under the isomorphism of Example \ref{intro-ex1}
becomes the usual product of polyvector fields; in particular, the element
\begin{equation*}
x^{m+1} \in k[x] \longrightarrow \mathcal{MD}(\en_A)[-1] \cong \bigoplus_{n \ge 0} \sym_A^n (\TT_A[-1])
\end{equation*}
corresponds to the $(m+1)$-th skew power of the Poisson bivector.
This suggests to define $\PP_1^{\le m}$ by the formula
\begin{equation}\label{heuristical-def}
\map_{\op_{\comm/}} (\PP_1^{\le m}, \O) \simeq \map_{\PP_2^\mathrm{gr}} (k[x]/(x^{m+1}), \mathcal{MD}(\O)[-1]) \:.
\end{equation}
However, the formula (\ref{heuristical-def}) does not give us a good definition for the following reason: the functor $\mathcal{MD}$
does not  preserve quasi-isomorphisms.

Nonetheless, in Section \ref{sec4.2} -- see Definition \ref{poly-def-pol-gr}, we construct a functor
\begin{equation*}
\pol \colon \op_{\comm/} \longrightarrow \alg^{\text{gr}}_{\PP_2} \:,
\end{equation*}
which is similar to $\mathcal{MD}$ in its essence: for instance, for a commuataive algebra $A$, $\pol(\en_A)$ is
quasi-isomorphic to the derived algebra of polyvector fields of $A$, see Theorem \ref{Calaque-Willwacher-thm}.
We prove in Proposition \ref{main-prop} that the functor $\pol$ preserves
quasi-isomorphisms; therefore, it induces a functor between localizations
\begin{equation}\label{derived-operadic-poly}
\Pol \colon \op_{\comm/}[W_\text{qis}^{-1}] \longrightarrow \alg^\text{gr}_{\PP_2}[W^{-1}_\text{qis}] \:.
\end{equation}
Moreover, in Proposition \ref{main-prop}, we also show that the functor (\ref{derived-operadic-poly}) admits a left-adjoint
\begin{equation*}
\F \colon \alg^\text{gr}_{\PP_2}[W^{-1}_\text{qis}] \longrightarrow \op_{\comm/}[W_\text{qis}^{-1}] \:,
\end{equation*}
which allows us to define the $m$-degenerate Poisson operad $\PP_1^{\le m}$ as $\F(k[x]/(x^{m+1}))$ -- see Definition \ref{main-def};
by that definition, $\PP_1^{\le m}$
satisfies the following universal property:
\begin{equation*}
\begin{split}
\map_{\op_{\comm/}} (\PP_1^{\le m}, \O) &\simeq \map_{\op_{\comm/}} (\F(k[x]/(x^{m+1})), \O) \simeq \\
&\simeq \map_{\PP_2^\mathrm{gr}} (k[x]/(x^{m+1}), \Pol(\O)) \:.
\end{split}
\end{equation*}

Moreover, in Proposition \ref{poisson-operad}, we show that there is an equivalence $\F(k[x]) \simeq \PP_1$ compatible with the maps 
from $\comm$, so that the factor map $k[x] \to k[x]/(x^{m+1})$ induces a map $\PP_1 \to \PP_1^{\le m}$, precomposition with which 
forgets the degeneracy datum of the degenerate Poisson structure.

For a commutative dg-algebra $A$, the space $\map_{\op_{\comm}}(\PP_1^{\le m}, \en_A)$
of $m$-degenerate Poisson structures on $A$ is computed as $\map_{\Alg_{\PP_2}^\gr} (k[x]/(x^{m+1}), \Pol(\en_A))$ with the underlying
Poisson structure given by the map
\begin{equation*}
\map_{\Alg_{\PP_2}^\gr} (k[x]/(x^{m+1}), \Pol(\en_A)) \longrightarrow \map_{\Alg_{\PP_2}^\gr} (k[x], \Pol(\en_A)) \simeq \map_{\op_{\comm/}} (\PP_1, \en_A) \:.
\end{equation*}
Let us add that in this case there is an equivalence of graded $\PP_2$-algebras
\begin{equation*}
\Pol(\en_A) \simeq \bigoplus_{n \ge 0} \sym_A^n (\TT_A[-1]) \:;
\end{equation*}
in particular, for the underlying Poisson structure we still have
\begin{equation*}
\map_{\Alg_{\PP_2}^\gr} (k[x], \Pol(\en_A)) \simeq \map_{\Alg_{\lie}^\gr} (k(2)[-1], \Pol(\en_A)[1]) \simeq \map_{\Alg_{\lie}^\gr} (k(2)[-1], \bigoplus_{n \ge 0} \sym_A^n (\TT_A[-1])[1]) \:.
\end{equation*}

\

\smallskip

\noindent \textbf{Notation and conventions.}

\begin{enumerate}

\item[$\bullet$] We work over a base field $k$ of characteristic zero.

\item[$\bullet$] Given a relative category $(C, W)$, we denote the underlying $\infty-$category by $C[W^{-1}]$.

\item[$\bullet$] We denote by $\dg_k$ the categry of chain complexes over k.

\item[$\bullet$] The localized category $\dg_k[W^{-1}_\text{qis}]$ is denoted by $\Dg_k$.

\item[$\bullet$] $\comm$ denotes the operad in chain complexes whose algebras are unital (strictly) commutative algebras in chain 
complexes. In particular, the underlying symmetric sequence is given by
\begin{equation*}
\comm(n) \cong k, \quad n \ge 0,
\end{equation*}
where all symmetric groups act trivially. The non-unital variant of the commutative operad is denoted by $\comm^\text{nu}$.

\item[$\bullet$] $\lie$ denotes the operad in chain complexes whose algebras are the differential graded Lie algebras.

\item[$\bullet$] $\PP_1$ denotes the operad whose algebras are the unital dg-Poisson algebras; the non-unital
variant is denoted $\PP_1^\text{nu}$.
The underlying symmetric sequences are given by
\begin{equation*}
\PP_1(n) \cong (\comm \circ \lie) (n) \cong \bigoplus_{k\ge 0} \comm(k) \otimes_{S_k} \left( \bigoplus_{i_1 + \ldots + i_k = n} \mathrm{Ind}_{S_{i_1}\times \ldots \times S_{i_k}}^{S_n} \lie(i_1) \otimes_k \ldots \otimes_k \lie(i_k) \right) \:,
\end{equation*}
and
\begin{equation*}
\PP^\text{nu}_1(n) \cong (\comm^\text{nu} \circ \lie) (n) \cong \bigoplus_{k\ge 0} \comm^\text{nu}(k) \otimes_{S_k} \left( \bigoplus_{i_1 + \ldots + i_k = n} \mathrm{Ind}_{S_{i_1}\times \ldots \times S_{i_k}}^{S_n} \lie(i_1) \otimes_k \ldots \otimes_k \lie(i_k) \right) \:,
\end{equation*}
see \cite{LV}, Section 13.3 for a description of the Poisson operad and Section 5.1 for a description of the composition monoidal 
product.

\item[$\bullet$] $\PP_2$ denotes the Gerstenhaber operad; its algebras are (shifted) Poisson algebras whose Lie bracket has 
cohomological degree $1$. An example of a $\PP_2$-algebra is the algebra of polyvector fields of a commutative algebra $A$
\begin{equation*}
\bigoplus_{n \ge 0} \sym_A^n (\TT_A[-1])
\end{equation*}
equipped with the Schouten bracket. The non-unital variant of the operad is denoted by $\PP_2^\text{nu}$.

\item[$\bullet$] At some point, we will need to work with $\lie$ and $\PP_2^\text{nu}$ algebras in chain complexes equipped with
additional grading. We will adopt a standard convention that the Lie bracket has weight -1 with respect the grading. We will use a
superscript $\mathrm{gr}$ to emphasize the fact that we are working with graded algebras.
For example, the algebra of polyvector fields 
of a commutative algebra $A$
\begin{equation*}
\bigoplus_{n \ge 0} \sym_A^n (\TT_A[-1])
\end{equation*}
is graded by the symmetric powers, so we write $\bigoplus_{n \ge 0} \sym_A^n (\TT_A[-1]) \in \alg^\gr_{\PP^\text{nu}_2}$.

\item[$\bullet$] We will also be interested in graded $\Omega(\coP_2\{2\})$-algebras. Operad $\Omega(\coP_2\{2\})$ is a quasi-free
model of the operad $\PP_2^\nu$, it is generated by the symmetric sequence $\overline{\coP_2\{2\}}[-1]$ and equipped with the cobar 
differential. We refer to Section \ref{subsec23} and Section \ref{subsec24} for some details and references
regarding the definition of the operad $\Omega(\coP_2\{2\})$. The weight of a generator, which is an element of
$\overline{\coP_2\{2\}}[-1]$, is defined to be equal to 1 minus the arity of the generator's $\ccomm$-part. In other words,
according to this definition, the Lie bracket has weight -1, the product has weight 0, weights of the other generators are defined
accordingly.

\item[$\bullet$] In Example \ref{computation1} and Example \ref{xsquared}, we will need to perform computations which involve
permutations. We would like to fix the following notation for the permutations: an element $\s \in S_n$ is denoted by the unique
$n-$tuple $(i_1, i_2, \ldots, i_n)$ such that
\begin{equation*}
(i_1, i_2, \ldots, i_n) = \s \cdot (1, 2, \ldots, n) \:,
\end{equation*}
where the right-hand side involves the usual action of $S_n$ on $n$-tuples.
For example, the transposition permuting the first two elements is denoted by $(2, 1, 3, 4, \ldots, n)$.

\item[$\bullet$] $\sh(i,j) \subset S_n$ is the subset of $(i,j)$-shuffles, it consists of permutations $\s \in S_n$ satisfying
\begin{equation*}
\s(1) < \s(2) < \ldots < \s(i) \:,
\end{equation*} and
\begin{equation*}
\s(i+1) < \s(i+2) < \ldots < \s(i+j = n) \:.
\end{equation*}
For example, the permutation $(1, 3, 2, 4) \in S_4$ is a $(2,2)$-shuffle; the permutation $(2, 1, 3, 4) \in S_4$ is not a 
$(2,2)$-shuffle.

\end{enumerate}

\

\smallskip

\noindent \textbf{Structure of the paper.}

In Section \ref{sec2}, we recall a few standard facts from the theory of algebraic operads.
In Section \ref{sec3}, we give a detailed look at the derived vanishing locus, defined as the derived induction along the map of 
operads (\ref{augmentation}). Specifically, we provide a general formula for the derived vanishing locus and we compute it in one 
particular 
case, in which the vanishing locus can be computed explicitly. In Section \ref{sec4}, we define and study the functor of polyvectors
$\pol \colon \op_{\comm/} \longrightarrow \alg^\text{gr}_{\PP_2}$, which then allows us to define the degenerate Poisson operads.
In addition, for each $m > 0$, we construct a simplicial resolution of the $m$-degenerate Poisson operad $\PP_1^{\le m}$
and compute $H^0(\PP_1^{\le m})$.
Finally, in Section \ref{sec5}, we answer a few basic questions about the degeneracy loci regarding their relation to the newly 
defined degenerate Poisson operads. Specifically, we prove that the classical induction along the map
$\PP_1 \to H^0(\PP_1^{\le m})$ 
gives us the classical degeneracy locus as defined in \ref{def1}, and that $H^0(-)$ of the derived degeneracy locus is given by the
classical degeneracy locus.

\

\smallskip

\noindent \textbf{Acknowledgements.} The author is indebted to Alexey Bondal and Christopher Brav for their constant support
and numerous helpful discussions throughout the work on this paper, and, in particular, to Christopher Brav for reading the draft
and making a few valuable suggestions. Also, the author kindly acknowledges that he benefited from
a few conversations with Alexey Rosly and Artem Prikhodko. Besides, the author would like to thank Alexey Bondal for introducing
him to the Poisson degeneracy loci and, in particular, to Conjecture \ref{conj-bondal}, which inspired many of the ideas 
expressed in this paper. The project was partially supported by RFBR and CNRS, grant number 21-51-15005.



\section{Basic facts and definitions}\label{sec2}

\subsection{Operads}

In this paper, we work with differential graded operads, i.e. operads in chain complexes over the base field $k$.
By an operad we always mean a dg-operad; we denote the category of operads by $\op$. Our definitions and notations for operads
mostly follow those of \cite{LV}.

Given a symmetric sequence $M \in \text{SymSeq}$, its shift $M[n] \in \text{SymSeq}$ is defined by the formula
\begin{equation*}
(M[n])(m) = M(m)[n] \:.
\end{equation*}
Note that, even if there was an operad structure on the symmetric sequence $M$, there are no canonical operad structure on $M[n]$ for
non-zero $n$ -- in fact, there could be no operad structure on $M[n]$ at all. However, given a symmetric sequence $M$, another symmetric
sequence $M\{n\} \in \text{SymSeq}$ is defined by the formula
\begin{equation*}
M\{n\} = M \stackrel{\text{H}}{\otimes_k} \en_{k[-n]} \:,
\end{equation*}
or, equivalently,
\begin{equation*}
M\{n\}(m) = M(m) \otimes_k \sgn_m^{\otimes n} [n(m-1)] \:,
\end{equation*}
where $\sgn_m$ denotes the sign representation of the symmetric group $S_m$; an operad (cooperad) structure on $M$ canonically induces
one on $M\{n\}$, see \cite{LV}, Section 5.3.2.

\subsection{Algebras over an operad}

Given an operad $\O$, we denote by $\alg_\O$ its category of algebras in chain complexes. Similarly, for a cooperad $\C$, we denote
by $\coalg_\C$ its category of conilpotent coalgebras in chain complexes. A map of operads
\begin{equation*}
\O_1 \longrightarrow \O_2
\end{equation*}
induces a Quillen adjuntion
\begin{equation*}
\ind^{\O_2}_{\O_1} \colon \alg_{\O_1} \rightleftarrows \alg_{\O_2} : \oblv^{\O_1}_{\O_2} \:,
\end{equation*}
where the right adjoint, denoted by $\oblv^{\O_1}_{\O_2}$, is the forgetful functor. This Quillen adjunction induces an adjunction
between the underlying $\infty-$categories:
\begin{equation*}
\Ind^{\O_2}_{\O_1} \colon \Alg_{\O_1} = \alg_{\O_1}[W^{-1}_\text{qis}] \rightleftarrows \alg_{\O_2}[W^{-1}_\text{qis}] = \Alg_{\O_2} : \Oblv^{\O_1}_{\O_2} \:,
\end{equation*}
where $W_\text{qis}$ denotes a class of morphisms which are quasi-isomorphisms. In a special case where $\O_1 = \mathbf{1}$ is the
trivial operad, whose category of algebras $\alg_{\mathbf{1}}$ is the category of complexes $\dg_k$, we adopt the following notation:
\begin{equation*}
\free^{\O_2} \colon \dg_k \rightleftarrows \alg_{\O_2} : \oblv_{\O_2}
\end{equation*}
denotes the Quillen adjunction, and
\begin{equation*}
\Free^{\O_2} \colon \Dg_k \rightleftarrows \Alg_{\O_2} : \Oblv_{\O_2}
\end{equation*}
denotes the adjuntion between $\infty-$categories.

It is a standard fact that $\Oblv^{\O_1}_{\O_2}$ is conservative and preserves sifted colimits, see, for example, \cite{GR2},
Chapter 6, Section 1.1.

\subsection{Koszul duality}\label{subsec23}

Recall that an operad $\O$ is called augmented if it is equipped with a map of operads
\begin{equation*}
\O \longrightarrow \mathbf{1} \:,
\end{equation*}
where $\mathbf{1} \in \op$ is the trivial operad. Typically, operads of interest to us that control non-unital algebras have
$\O(0) = 0$ and $\O(1) \cong k$; such an operad admits a unique augmentation. Examples include $\ass^\text{nu}$, $\comm^\text{nu}$, 
$\lie$, and $\PP_1^\text{nu}$.

We refer to \cite{LV}, Section 7, for the Koszul duality theory for augmented operads. Given a quadratic augmented operad $\O$, the
theory produces a quadratic coaugmented cooperad $\C$ together with a morphism
\begin{equation}\label{basics-kos-res}
\Omega \C \longrightarrow \O \:,
\end{equation}
where $\Omega$ denotes the operadic cobar construction, see \cite{LV}, Section 6.5, which maps a coaugmented cooperad into
a quasi-free operad generated by $\overline{\C}[-1]$ whose differential comes from the differential of $\C$ and the
coproduct of $\C$. Operad $\O$ is called Koszul if the map (\ref{basics-kos-res}) is a quasi-isomorphism; in that case
the cooperad $\C$ is said to be Koszul dual to the operad $\O$, and 
(\ref{basics-kos-res}) gives us a quasi-free resolution of the operad $\O$. All of the concrete operads mentioned in the previous 
paragraph are known to be Koszul.

Given a Koszul dual pair $\C - \O$, there is an adjoint pair
\begin{equation*}
\Omega_{\O} \colon \coalg_\C \rightleftarrows \alg_\O : \text{B}_{\O} \:.
\end{equation*}
The right adjoint is called bar construction, it maps an algebra $A \in \alg_\O$ to a quasi-cofree coalgebra
\begin{equation*}
\bigoplus_{n\ge 0} (\C(n) \otimes_k A^{\otimes n})^{S_n}
\end{equation*}
equipped with the bar differential, that is combined from the differential on $A$ and the algebra structure on $A$, see \cite{LV},
Section 11. Similarly, $\Omega_\O$ maps a coalgebra $C \in \coalg_\C$ to a quasi-free algebra
\begin{equation*}
\bigoplus_{n\ge 0} (\O(n) \otimes_k C^{\otimes n})_{S_n}
\end{equation*}
equipped with the cobar differential. Recall that a morphism in $\coalg_\C$ is defined to be a weak-equivalence if the cobar 
construction
$\Omega_\O$ maps it into a quasi-isomorphism. Such weak-equivalences form a subcategory $W_\text{Kos} \subset \coalg_\C$.
It turns out that, for any algebra, $A \in \alg_\O$, the adjunction map
\begin{equation*}
\Omega_\O \text{B}_\O (A) \longrightarrow A
\end{equation*}
is a quasi-isomorphism; therefore, functors $\Omega_\O$ and $\text{B}_\O$ preserve weak-equivalences, and the resulting
adjunction
\begin{equation*}
\Omega_{\O} \colon \coalg_\C[W^{-1}_\text{Kos}] \rightleftarrows \alg_\O[W^{-1}_\text{qis}] = \Alg_\O : \text{B}_{\O}
\end{equation*}
is an equivalence of $\infty-$categories, see, for example, \cite{Safronov_2018}, Proposition 1.7.

It happens that opeards that control unital algebras, such as $\ass$, $\comm$, and $\PP_1$, tend to not admit an augmentation. Yet,
developed in \cite{curved_Koszul}, there is a Koszul duality theory for semi-agumented operads: an operad $\O$ is called semi-augmented
if it is equipped with a map of the underlying symmetric sequences
\begin{equation*}
\O \longrightarrow \mathbf{1}
\end{equation*}
not necessarily commuting with the differentials, such that the composite
\begin{equation*}
\mathbf{1} \longrightarrow \O \longrightarrow \mathbf{1}
\end{equation*}
is identity. The Koszul duality for semi-augmented operads works identically to the Koszul duality for augemented operads, except the
differential of the Koszul dual cooperad $\C$ does not squares to zero, and a curving $\C(1) \longrightarrow k[2]$ is introduced
to account for that, see \cite{curved_Koszul}, Section 3.

\subsection{Examples}\label{subsec24}

First, we will need the following fact, which has become classics by now and is found in \cite{LV}, Section 13,
that operads $\comm^\text{nu}$, $\lie$, $\PP^\text{nu}_1, \PP^\nu_2$ are Koszul, and their Koszul dual are $\colie\{1\}$,
$\ccomm\{1\}$, $\coP_1\{1\}$, and $\coP_2\{2\}$ respectively.

It follows from \cite{curved_Koszul}, Proposition 4.3.3, that operads $\comm$ and $\PP_1$ are also Koszul. Now, in analogy with
the case of the associative operad, which was studied in \cite{curved_Koszul}, Section 6, we would like to sketch a description of the
Koszul dual of $\comm$; it is usually denoted by $\ccolie\{1\}$. In accordance with \cite{curved_Koszul}, Section 4, the underlying
graded cooperad of $\ccolie\{1\}$ is
\begin{equation*}
\colie\{1\} \oplus \coE_0\{-1\} \:,
\end{equation*}
where $\coE_0$ is the cooperad of complexes equipped with a functional and $\oplus$ denotes the product in the category of conilponent
cooperads, i.e. the conilpotent cooperad cogenerated by $\colie\{1\}$ and $\coE_0\{-1\}$. Moreover, $\ccolie\{1\}$ is equipped with 
a trivial differential and a curving
\begin{equation*}
\theta \colon \ccolie\{1\}(1) \longrightarrow k[2]
\end{equation*}
that sends the cooperation
\begin{equation*}
\begin{tikzpicture}[baseline = -5]
\node (v1) at (0.5,1) {};
\node[ext] (v2) at (-0.5,1) {$u$};
\node[ext] (v3) at (0, 0) {$\delta$};
\node (vr) at (0, -1) {};
\draw (v1) edge (v3);
\draw (v2) edge (v3);
\draw (v3) edge (vr);
\end{tikzpicture}
\end{equation*}
to $-1$. A coalgebra over $\ccolie$ is a Lie coalgebra $C$ equipped with a degree $1$ derivation $d \colon C \to C$ and a curving
$\theta \colon C \to k[2]$ that satisfy relations
\begin{equation*}
d^2x = \theta(x_{(1)})x_{(2)}
\end{equation*} and
\begin{equation*}
\theta(dx) = 0 \:.
\end{equation*}
Coalgebras over $\ccolie$ are called curved Lie coalgebras.

The Koszul dual of $\PP_1$ admits a similar describtion, see \cite{Safronov_2018}, Section 1.4. It is denoted by $\ccoP_1\{1\}$, and its
coalgebras are shifted curved Poisson coalgebras.

\subsection{Hopf cooperads}

Recall that the category of cooperads has a symmetric monoidal structure given by the Hadamard product. A Hopf cooperad is a
commutative algebra in cooperads. For example, the cocomuutative counital cooperad $\ccomm^\text{cu}$ is the monoidal unit for
that symmetric monoidal structure; thus, the cooperad $\ccomm^\text{cu}$ is canonically a Hopf cooperad. Another example, which is
of a particular interest to us, is that setting
\begin{equation*}
\delta * \delta  = 0 \:,
\end{equation*}
where $\delta \in \cucoP_1(2)$ is the Lie cobracket, makes the cooperad $\cucoP_1$ into a Hopf cooperad.

\subsection{Brace construction}\label{basics-brace}

Let $\C$ be a cooperad, and $\O$ be an operad. Recall that one can form the convolution pre-Lie algebra
\begin{equation}\label{basics-conv-alg}
\conv(\C, \O) = \prod_{n \ge 0} \uhom_{S_n} (\C(n), \O(n))
\end{equation}
whose pre-Lie product is given by the formula
\begin{equation*}
f \star g = \gamma_{(1)} \circ (f \otimes g) \circ \Delta_{(1)} \:,
\end{equation*}
where $\gamma_{(1)}$ and $\Delta_{(1)}$ denote the infinitesimal composition and the infinitesimal decomposition of $\O$ and $\C$ 
correspondingly, see \cite{LV}, Section 6.4.2.

According to \cite{prelie}, the operad $\prelie$ that controls pre-Lie algebras has the following description:
$\prelie(n)$ is the vector space spanned by rooted trees with $n$ vertices labeled by numbers from $1$ to $n$, and the symmetric group 
$S_n$ acts on $\prelie(n)$ by permutation of the labels. Given a pair of trees $T \in \prelie(n)$ and $S \in \prelie(m)$, their
composition $T \circ_i S$ is defined by the formula
\begin{equation*}
T \circ_i S = \sum_{f \colon \text{In}(T, i) \to [m]} T \circ_i^f S \:,
\end{equation*}
where $\text{In}(T, i)$ denotes the set incoming edges at the vertex $i$, and $T \circ_i^f S \in \prelie(n+m-1)$ denotes the rooted
tree obtained by replacing the vertex $i$ of the tree $T$ by the tree $S$ and attaching edges from $\text{In}(T, i)$ via the map $f$;
the root of $T\circ_i^f S$ is given by the of root of $T$ in case that $i$ is not the root of $T$ and by the root of $S$ otherwise.
For example,
\begin{equation*}
\begin{tikzpicture}[baseline = -5]
\node[ext] (v1) at (0,-0.5) {$1$};
\node[ext] (v2) at (0,0.5) {$2$};
\node (v3) at (0, -1) {};
\draw (v1) edge (v2);
\draw (v1) edge (v3);
\end{tikzpicture}
\quad \circ_1 \quad
\begin{tikzpicture}[baseline = -5]
\node[ext] (v1) at (0,-0.5) {$1$};
\node[ext] (v2) at (0,0.5) {$2$};
\node (v3) at (0, -1) {};
\draw (v1) edge (v2);
\draw (v1) edge (v3);
\end{tikzpicture}
\quad = \quad
\begin{tikzpicture}[baseline = -5]
\node[ext] (v1) at (0,-0.5) {$1$};
\node[ext] (v2) at (-0.5,0.5) {$2$};
\node[ext] (v3) at (0.5,0.5) {$3$};
\node (vr) at (0, -1) {};
\draw (v1) edge (v2);
\draw (v1) edge (v3);
\draw (v1) edge (vr);
\end{tikzpicture}
\quad + \quad
\begin{tikzpicture}[baseline = -5]
\node[ext] (v1) at (0,-0.5) {$1$};
\node[ext] (v2) at (0,0.5) {$2$};
\node[ext] (v3) at (0,1.5) {$3$};
\node (vr) at (0, -1) {};
\draw (v1) edge (v2);
\draw (v2) edge (v3);
\draw (v1) edge (vr);
\end{tikzpicture}
\quad,
\end{equation*}
and
\begin{equation*}
\begin{tikzpicture}[baseline = -5]
\node[ext] (v1) at (0,-0.5) {$1$};
\node[ext] (v2) at (0,0.5) {$2$};
\node (v3) at (0, -1) {};
\draw (v1) edge (v2);
\draw (v1) edge (v3);
\end{tikzpicture}
\quad \circ_2 \quad
\begin{tikzpicture}[baseline = -5]
\node[ext] (v1) at (0,-0.5) {$1$};
\node[ext] (v2) at (0,0.5) {$2$};
\node (v3) at (0, -1) {};
\draw (v1) edge (v2);
\draw (v1) edge (v3);
\end{tikzpicture}
\quad = \quad
\begin{tikzpicture}[baseline = -5]
\node[ext] (v1) at (0,-0.5) {$1$};
\node[ext] (v2) at (0,0.5) {$2$};
\node[ext] (v3) at (0,1.5) {$3$};
\node (vr) at (0, -1) {};
\draw (v1) edge (v2);
\draw (v2) edge (v3);
\draw (v1) edge (vr);
\end{tikzpicture}
\quad.
\end{equation*}
Under this description, the pre-Lie product corresponds to the tree
$\scriptstyle
\begin{tikzpicture}[scale=.85,baseline=.5]
\node[ext] (v1) at (0,-0.5) {$1$};
\node[ext] (v2) at (0,0.5) {$2$};
\node (v3) at (0, -1) {};
\draw (v1) edge (v2);
\draw (v1) edge (v3);
\end{tikzpicture}
$ $\in \prelie(2)$.

Now let $\C$ be a Hopf cooperad such that $\C(0) \cong \C(1) \cong k$. In this case, D. Calaque and T. Willwacher, \cite{Calaque_2015},
extended the action of the operad $\prelie$ on the convolution algebra (\ref{basics-conv-alg}) to an action of an operad $\prelie_\C$,
whose definition we now would like to sketch. The vector space $\prelie_\C(n)$ is spanned by the same rooted trees that span
$\prelie(n)$, but now each vertex of a tree is additionally labeled by an element of $\C$ of arity equal to the in-degree of the
vertex. The composition coincides with that of $\prelie$ on the underlying trees and uses cooperad and Hopf structures on $\C$ to
act on the labels, see \cite{Calaque_2015}, Section 3. The operad $\prelie_\C$ is generated by labeled trees of the form
\begin{equation}\label{basics-prelie-generator}
\begin{tikzpicture}[baseline = -5]
\node[ext] (v2) at (-1,0.5) {$1$};
\node[ext] (v3) at (1,0.5) {$k$};
\node (v0) at (0,-0.5) {};
\node (vv) at (0,0.5) {$\cdots$};
\node[ext] (v1) at (0,0) {$0$};
\draw (v0) edge (v1);
\draw (v2) edge (v1);
\draw (v3) edge (v1);
\end{tikzpicture} \quad.
\end{equation}
Action of the generators of $\prelie_\C$ on the convolution algebra
\begin{equation*}
\conv(\C, \O) = \prod_{n \ge 0} \uhom_{S_n} (\C(n), \O(n))
\end{equation*}
has the following description: tree (\ref{basics-prelie-generator}) labeled by an element $c \in \C(k)$ maps a tuple
$(f, f_1, f_2, \ldots, f_k)$, $f \in \uhom_{S_{k+j}}(\C(k+j), \O(k+j))$, $f_i \in \uhom_{S_{j_i}}(\C(j_i), \O(j_i))$, into
\begin{equation*}
\begin{tikzpicture}[baseline = -5]
\node[ext] (v2) at (-1,0.5) {$1$};
\node[ext] (v3) at (1,0.5) {$k$};
\node (v0) at (0,-0.5) {};
\node (vv) at (0,0.5) {$\cdots$};
\node[ext] (v1) at (0,0) {$0$};
\draw (v0) edge (v1);
\draw (v2) edge (v1);
\draw (v3) edge (v1);
\end{tikzpicture} \quad (f (\tilde{c} \cdot -) , f_1, f_2, \ldots, f_k) \:,
\end{equation*}
where $\tilde c$ denotes the image of $c$ under the map
\begin{equation*}
\C(k) \longrightarrow \C(k+j) \otimes_k \underbrace{\C(1) \otimes_k \ldots \otimes_k \C(1)}_k \otimes_k \underbrace{\C(0) \otimes_k \ldots \otimes_k \C(0)}_j \cong \C(k+j) \:,
\end{equation*}
$f(\tilde{c} \cdot -)$ denotes the compostion of multiplication by $\tilde c$ and $f$, and the action of the tree on the tuple is given 
by the underlying action of $\prelie$.

Given a Maurer-Cartan element $\mu$ in a dg-Lie algebra, the same Lie algebra equipped with a new differential $d + [\mu, -]$ is, in
fact, a dg-Lie algebra. However, a pre-Lie structure in general does not descend to a pre-Lie structure on the twisted Lie algebra,
let alone a $\prelie_\C$-structure. Nevertheless, V. Dolgushev and T. Willwacher, \cite{Twisting}, introduced a construction called
operadic twisting, which, in particular, when applied to $\prelie_\C$, tells us which operad acts on the convolution algebra
$\conv(\C, \O)$ twisted by a Maurer-Cartan element.

Now, following \cite{Calaque_2015}, we would like to describe that operad, denoted $\br_\C$, together with its action on the twisted
convolution algebra. We will focus on a special case where $\C = \cucoP_1$, which is of a special interest to us.
Operad $\br_{\cucoP_1}$ is generated by trees
\begin{equation*}
\begin{tikzpicture}[baseline = -5]
\node[ext] (v2) at (-1,0.5) {$1$};
\node[ext] (v3) at (1,0.5) {$k$};
\node (v0) at (0,-0.5) {};
\node (vv) at (0,0.5) {$\cdots$};
\node[ext] (v1) at (0,0) {$0$};
\draw (v0) edge (v1);
\draw (v2) edge (v1);
\draw (v3) edge (v1);
\end{tikzpicture} \quad\quad
\begin{tikzpicture}[baseline = -5]
\node[ext] (v2) at (-1,0.5) {$1$};
\node[ext] (v3) at (1,0.5) {$k$};
\node (v0) at (0,-0.5) {};
\node (vv) at (0,0.5) {$\cdots$};
\node[int] (v1) at (0,0) {};
\draw (v0) edge (v1);
\draw (v2) edge (v1);
\draw (v3) edge (v1);
\end{tikzpicture}
\qquad,
\end{equation*}
where the former tree is labeled by an element from $\cucoP_1(k)$, and the latter tree has at least $2$ leaves and
is labeled by an element from $\overline{\coP_1}(k)[-1]$. The latter tree has arity $k$; in terms of the underlying
$\prelie_\C$-action on the underlying graded vector space, its action on a tuple
$(f_1, \ldots, f_k)$, $f_i \in \uhom_{S_{j_i}} (\cucoP_1\{1\}(j_i), \O(j_i))$, is given by
\begin{equation*}
\begin{tikzpicture}[baseline = -5]
\node[ext] (v2) at (-1,0.5) {$1$};
\node[ext] (v3) at (1,0.5) {$k$};
\node (v0) at (0,-0.5) {};
\node (vv) at (0,0.5) {$\cdots$};
\node[ext] (v1) at (0,0) {$0$};
\draw (v0) edge (v1);
\draw (v2) edge (v1);
\draw (v3) edge (v1);
\end{tikzpicture} \quad (\mu , f_1, f_2, \ldots, f_k) \:,
\end{equation*}
where the root of the tree is labeled by the same element from $\overline{\coP_1}(k)[-1]$, and $\mu$ denotes the Maurer-Cartan
element. Operadic composition of $\br_{\cucoP_1}$ is similar to that of $\prelie_\C$. Differential on $\br_{\cucoP_1}$
is roughly given by the following formulas:
\begin{equation*}
d \begin{tikzpicture}[baseline = -5]
\node[ext] (v2) at (-1,0.5) {$1$};
\node[ext] (v3) at (1,0.5) {$k$};
\node (v0) at (0,-0.5) {};
\node (vv) at (0,0.5) {$\cdots$};
\node[ext] (v1) at (0,0) {$0$};
\draw (v0) edge (v1);
\draw (v2) edge (v1);
\draw (v3) edge (v1);
\end{tikzpicture} = \sum (
\pm\begin{tikzpicture}[baseline = -5]
\node (empty) at (0, -0.5) {};
\node[int] (root) at (0,0) {};
\node[ext] (v1) at (-1, 0.5) {$1$};
\node[ext] (vc) at (0.4, 0.5) {$0$};
\node[ext] (vr) at (1.4, 0.5) {$k$};
\node[ext] (vcl) at (-0.1, 1) {$i$};
\node[ext] (vcr) at (0.9, 1) {$j$};
\node (dots1) at (-0.3, 0.5) {$\cdots$};
\node (dots2) at (0.4, 1) {$\cdots$};
\node (dots3) at (0.9, 0.5) {$\cdots$};
\draw (empty) edge (root);
\draw (root) edge (v1);
\draw (root) edge (vc);
\draw (root) edge (vr);
\draw (vc) edge (vcr);
\draw (vc) edge (vcl);
\end{tikzpicture} \pm 
\begin{tikzpicture}[baseline = -5]
\node (empty) at (0, -0.5) {};
\node[ext] (root) at (0,0) {$0$};
\node[ext] (v1) at (-1, 0.5) {$1$};
\node[int] (vc) at (0.4, 0.5) {};
\node[ext] (vr) at (1.4, 0.5) {$k$};
\node[ext] (vcl) at (-0.1, 1) {$i$};
\node[ext] (vcr) at (0.9, 1) {$j$};
\node (dots1) at (-0.3, 0.5) {$\cdots$};
\node (dots2) at (0.4, 1) {$\cdots$};
\node (dots3) at (0.9, 0.5) {$\cdots$};
\draw (empty) edge (root);
\draw (root) edge (v1);
\draw (root) edge (vc);
\draw (root) edge (vr);
\draw (vc) edge (vcr);
\draw (vc) edge (vcl);
\end{tikzpicture} ) \qquad,
\end{equation*}
and
\begin{equation*}
d \begin{tikzpicture}[baseline = -5]
\node[ext] (v2) at (-1,0.5) {$1$};
\node[ext] (v3) at (1,0.5) {$k$};
\node (v0) at (0,-0.5) {};
\node (vv) at (0,0.5) {$\cdots$};
\node[int] (v1) at (0,0) {};
\draw (v0) edge (v1);
\draw (v2) edge (v1);
\draw (v3) edge (v1);
\end{tikzpicture} = \sum \pm
\begin{tikzpicture}[baseline = -5]
\node (empty) at (0, -0.5) {};
\node[int] (root) at (0,0) {};
\node[ext] (v1) at (-1, 0.5) {$1$};
\node[int] (vc) at (0.4, 0.5) {};
\node[ext] (vr) at (1.4, 0.5) {$k$};
\node[ext] (vcl) at (-0.1, 1) {$i$};
\node[ext] (vcr) at (0.9, 1) {$j$};
\node (dots1) at (-0.3, 0.5) {$\cdots$};
\node (dots2) at (0.4, 1) {$\cdots$};
\node (dots3) at (0.9, 0.5) {$\cdots$};
\draw (empty) edge (root);
\draw (root) edge (v1);
\draw (root) edge (vc);
\draw (root) edge (vr);
\draw (vc) edge (vcr);
\draw (vc) edge (vcl);
\end{tikzpicture} \qquad;
\end{equation*}
on the labels the differential acts as the cobar differential, see \cite{Twisting}, Section 8.

Drawing inspiration from D. Tamarkin's work \cite{Tamarkin}, D. Calaque and T. Willwacher, \cite{Calaque_2015}, Section 3.2, 
constructed a map
\begin{equation}\label{basics-tamarkin-map}
\Omega(\coP_2\{1\}) \longrightarrow \br_{\cucoP_1} \:.
\end{equation}
The map (\ref{basics-tamarkin-map}) acts on generators as follows:
\begin{enumerate}
\item [1)] a generator $\underline{X_1 \ldots X_k} \in \colie(k) \subset \coP_2\{1\}(k)$ is send to the tree
\begin{equation*}
\begin{tikzpicture}[baseline = -5]
\node[ext] (v2) at (-1,0.5) {$1$};
\node[ext] (v3) at (1,0.5) {$k$};
\node (v0) at (0,-0.5) {};
\node (vv) at (0,0.5) {$\cdots$};
\node[int] (v1) at (0,0) {};
\draw (v0) edge (v1);
\draw (v2) edge (v1);
\draw (v3) edge (v1);
\end{tikzpicture}
\end{equation*}
labeled by the element $s^{-1}\underline{X_1 \ldots X_k} \in \colie(k)[-1] \subset \coP_1(k)[-1]$, where by
$\underline{X_1 \ldots X_k}$ we denote the image of the $k$-ary comultiplication under the projection
$\mathrm{coAss}(k) \to \colie(k)$;
\item [2)] the generator $X_1 \wedge X_2 \in \ccomm\{1\}(2) \subset \coP_2\{1\}(2)$ is send to the Lie bracket
\begin{equation*}
\begin{tikzpicture}[baseline=-5]
\node[ext] (v1) at (0,-0.5) {$1$};
\node[ext] (v2) at (0,0.5) {$2$};
\node (v3) at (0, -1) {};
\draw (v1) edge (v2);
\draw (v1) edge (v3);
\end{tikzpicture} \quad - \quad
\begin{tikzpicture}[baseline=-5]
\node[ext] (v1) at (0,-0.5) {$2$};
\node[ext] (v2) at (0,0.5) {$1$};
\node (v3) at (0, -1) {};
\draw (v1) edge (v2);
\draw (v1) edge (v3);
\end{tikzpicture} \qquad;
\end{equation*}
\item[3)] a generator $X_1 \wedge \underline{X_2 \ldots X_k} \in \coP_2\{1\}(k)$, $k \ge 3$, is send to the tree
\begin{equation*}
\begin{tikzpicture}[baseline = -5]
\node[ext] (v2) at (-1,0.5) {$2$};
\node[ext] (v3) at (1,0.5) {$k$};
\node (v0) at (0,-0.5) {};
\node (vv) at (0,0.5) {$\cdots$};
\node[ext] (v1) at (0,0) {$1$};
\draw (v0) edge (v1);
\draw (v2) edge (v1);
\draw (v3) edge (v1);
\end{tikzpicture}
\end{equation*}
labeled by the element $s^{-1}\underline{X_2 \ldots X_k} \in \colie(k-1)[-1] \subset \coP_1(k-1)[-1]$;
\item[4)] the rest of the generators are send to zero.
\end{enumerate}
\begin{theorem}[\cite{Calaque_2015}, Theorem 4]
The map (\ref{basics-tamarkin-map}) is a quasi-isomorphism.
\end{theorem}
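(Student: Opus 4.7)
The plan is to reduce the statement to a cohomology comparison. The source $\Omega(\coP_2\{1\})$ is the cobar of a (shifted) cooperad, and its cohomology as a symmetric sequence is explicit via Koszul duality, since $\PP_2^{\mathrm{nu}}$ is Koszul (Section \ref{subsec24}). Thus it suffices to (a) compute $H^*(\br_{\cucoP_1})$ as a symmetric sequence, and (b) check that the map sends the listed generators to representatives of the correct cohomology classes.

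For (a), I would equip $\br_{\cucoP_1}(n)$ with the filtration by the number of black (internal) vertices in a tree. The differential decomposes into a horizontal piece --- using the cobar differential on $\cucoP_1$-labels together with white-to-white splittings that preserve the count --- and a vertical piece that creates a new black vertex out of a white one. On the associated graded only the horizontal piece remains, so the $E_1$ page reduces to tree complexes with a fixed number of black vertices. Via standard contracting homotopies from operadic twisting (as in \cite{Twisting}) combined with Koszulness of $\PP_1^{\mathrm{nu}}$, one argues that the only surviving classes are (i) white-rooted trees reflecting the $\ccomm\{1\}\circ\colie$ structure on the labels and (ii) single-black-vertex corollas labeled by $\colie$. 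These assemble into precisely the symmetric sequence that appears on the source side.

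For (b), I would trace the Tamarkin map through the filtration: the cogenerator $\underline{X_1\ldots X_k}\in\colie$ hits the black corolla, the mixed cogenerator $X_1\wedge\underline{X_2\ldots X_k}$ hits a surviving white-rooted tree with one white-labeled root and a single corolla below, and the shifted bracket $X_1\wedge X_2$ goes to the skew-symmetrization of a height-two tree, which after the above homotopy represents the natural Lie class on $E_1$. With these identifications the induced map on $E_1$ is an isomorphism, and since the filtration is finite in each arity the spectral sequence converges, yielding the theorem. The main obstacle will be the step-(a) analysis --- controlling signs introduced by the shift $\{1\}$ and producing contracting homotopies for the black-vertex subcomplexes that are compatible with the horizontal differential; this is essentially the technical core of \cite{Calaque_2015}, and the rest of the argument is essentially bookkeeping.
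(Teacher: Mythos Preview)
The paper does not give its own proof of this theorem: it is stated with attribution to \cite{Calaque_2015}, Theorem 4, and immediately used. There is therefore nothing in the present paper to compare your proposal against.

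As a sketch of the argument in \cite{Calaque_2015}, your outline is in the right spirit---a filtration/spectral sequence argument reducing to Koszulness of $\PP_1^{\mathrm{nu}}$ and the identification of the surviving tree classes---but be aware that the filtration actually used there is not by the number of black vertices as you describe. You should also note that your decomposition of the differential is not quite right as stated: from the formulas displayed in Section~\ref{basics-brace}, the differential of a single white-vertex generator produces trees with a new black vertex in \emph{both} summands, so there is no ``horizontal'' white-to-white splitting that preserves the black-vertex count; the piece that is genuinely degree-zero with respect to your grading is the cobar differential on the $\overline{\coP_1}[-1]$-labels of the black vertices. If you want to carry this through yourself rather than cite \cite{Calaque_2015}, you will need to rework the filtration (for instance by total number of vertices, or by a suitable weight on labels) so that the associated graded really isolates a computable piece.
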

Let us note that, via the map (\ref{basics-tamarkin-map}), the action of $\br_{\cucoP_1}$ on $\conv(\cucoP_1\{1\}, \O)$ gives rise
to an action of
\begin{equation*}
\Omega(\coP_2\{2\}) \cong \Omega(\coP_2\{1\})\{1\} \:,
\end{equation*}
which is a quasi-free model of $\PP_2^\nu$,
on $\conv(\cucoP_1\{1\}, \O)[-1]$.
\begin{example} [see Section \ref{sec4.1} for details]
For example, if $\O = \en_A$ for a commutative dg-algebra $A$, then the convolution algebra $\conv(\cucoP_1\{1\}, \en_A)[-1]$ twisted
by the commutative structure on $A$ becomes quasi-isomorphic as a (homotopy) $\PP_2^\text{nu}$-algebra to the completed algebra
\begin{equation*}
\prod_{n \ge 0} \sym^n_A(\TT_A[-1])
\end{equation*}
of polyvector fields on $A$.
\end{example}
\begin{remark}
Let us note that we could have started with a non-completed convolution Lie algebra
\begin{equation*}
\bigoplus_{n \ge 0} \uhom_{S_n} (\C(n), \O(n)) \:,
\end{equation*}
and the operad $\prelie_\C$ still acts on this non-completed version. Moreover, we can twist the non-comleted convolution algebra
by an MC element that sits in a finite number of weight degrees, and then the operad $\br_\C$ acts on that twisted non-completed
convolution algebra.
\end{remark}



\section{Derived Poisson vanishing locus}\label{sec3}

In this section we would like to give an explicit description to the functor of vanishing locus\
\begin{equation*}
\mathcal{T}^{\le 0} \colon \Alg_{\PP_1} \longrightarrow \Alg_\comm \:,
\end{equation*}
which is given by derived induction along the natural map of operads
\begin{equation*}
\PP_1 \longrightarrow \comm \:,
\end{equation*}
which is identity on the multiplication and sends the lie bracket to zero.

Recall that, for a Koszul dual pair $\C - \O$, there is a bar-cobar adjunction
\begin{equation*}
\Omega_\O \colon \coalg_\C \rightleftarrows \alg_\O : B_\O \:,
\end{equation*}
which descends to an adjoint equivalence
\begin{equation*}
\Omega_\O \colon \coalg_\C[W^{-1}_\text{Kos}] \stackrel{\simeq}{\rightleftarrows} \alg_\O[W^{-1}_\text{qis}] = \Alg_\O : B_\O \:.
\end{equation*}

\begin{lemma}
Diagram
\[\xymatrix{
\coalg_{\ccoP_1\{1\}}[W^{-1}_\text{Kos}] \ar[rr]^(0.6){\Omega_{\PP_1}}\ar[d]_{\mathbf{oblv}} && \Alg_{\PP_1} \ar[d]^{\mathcal{T}^{\le 0}} \\
\coalg_{\ccolie\{1\}}[W^{-1}_\text{Kos}] \ar[rr]^(0.6){\Omega_\comm} && \Alg_\comm & ,
}\]
where the left vertical arrow is the forgetful functor, is commutative.
\end{lemma}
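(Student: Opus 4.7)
The plan is to show that the strict quotient of the cobar complex $\Omega_{\PP_1}(C)$ by the ideal generated by the Lie brackets coincides with the cobar complex of the restricted coalgebra $\Oblv\, C$, and to use cofibrancy of the cobar complex to promote this to a statement about the derived functor $\mathcal{T}^{\le 0}$.

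First I would note that for any $C \in \coalg_{\ccoP_1\{1\}}$, the cobar complex $\Omega_{\PP_1}(C)$ is quasi-free as a $\PP_1$-algebra, hence cofibrant, so the derived induction $\mathcal{T}^{\le 0}$ is computed on the nose by the strict induction along $\PP_1 \to \comm$, i.e.\ by the quotient $\Omega_{\PP_1}(C) \twoheadrightarrow \Omega_{\PP_1}(C)/(\{-,-\})$. As a graded commutative algebra, this quotient identifies naturally with $\Omega_\comm(\Oblv\, C)$: the underlying graded $\PP_1$-algebra of $\Omega_{\PP_1}(C)$ is the free $\PP_1$-algebra on $\overline{C}[-1]$, and passing to the quotient by Lie brackets turns the free Poisson algebra on $\overline{C}[-1]$ into the free commutative algebra on the same generators, which is the underlying graded commutative algebra of $\Omega_\comm(\Oblv\, C)$, since $\Oblv$ is the identity on underlying complexes.

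To match the differentials, I would use the naturality of the cobar construction with respect to the morphism of Koszul-dual cooperads $\ccolie\{1\} \hookrightarrow \ccoP_1\{1\}$ induced by the operad map $\PP_1 \to \comm$. The cobar differential on $\Omega_{\PP_1}(C)$ is built from the infinitesimal decomposition of the $\ccoP_1\{1\}$-coalgebra structure of $C$; upon passing to the commutative quotient, every contribution whose cooperation involves a Lie cobracket produces a bracket of generators and therefore vanishes, while the surviving contributions use only the subcooperad $\ccolie\{1\}$ and, when restricted to $\Oblv\, C$, reassemble precisely into the cobar differential of $\Omega_\comm(\Oblv\, C)$. Since the curving on $\ccolie\{1\}$ is the restriction of the curving on $\ccoP_1\{1\}$ along the inclusion, the curving contributions to the two differentials also match.

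The main obstacle is to justify the assertion that $\ccolie\{1\}$ really embeds as a subcooperad of $\ccoP_1\{1\}$ in a way compatible with cocompositions and curvings, and that the infinitesimal decomposition of $\ccoP_1\{1\}$ applied to an element of $\ccolie\{1\}$ lands in $\ccolie\{1\} \otimes \ccoP_1\{1\} + \ccoP_1\{1\} \otimes \ccolie\{1\}$ modulo terms whose images in the cobar quotient involve a bracket; this is the Koszul-dual shadow of the fact that $\PP_1 \to \comm$ is a morphism of semi-augmented operads, and it is a direct consequence of the descriptions recalled in Section \ref{subsec24}, formally parallel to the associative-commutative computation of \cite{curved_Koszul}. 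Granted this, the bookkeeping of differentials delivers a natural isomorphism $\ind^\comm_{\PP_1}(\Omega_{\PP_1}(C)) \cong \Omega_\comm(\Oblv\,C)$ of commutative dg-algebras, which on passage to the localizations yields the commutativity of the square.
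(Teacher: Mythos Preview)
Your strategy --- compute the strict induction of the cobar complex and identify it with the cobar of the restricted coalgebra --- is a legitimate direct route, but it is not the paper's argument. The paper observes that all four functors in the square are left adjoints, passes to the square of right adjoints (replacing $\Omega$ by $B$, $\mathcal{T}^{\le 0}$ by $\mathbf{triv}$, and $\mathbf{oblv}$ by $\mathbf{cofree}$), and checks that $B_{\PP_1} \circ \mathrm{triv} = \mathrm{cofree} \circ B_\comm$ at the $1$-categorical level, which is immediate from the definition of the bar construction applied to a commutative algebra equipped with the zero bracket. This sidesteps any analysis of cobar differentials.

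Your direct computation, however, contains a genuine confusion that makes the argument internally inconsistent as written. The Koszul twisting morphism $\kappa \colon \ccoP_1\{1\} \to \PP_1$ sends the cocommutative cogenerator $\Delta_2 \in \ccomm\{1\}(2)$ to the Lie bracket $l$, and the Lie cobracket $\delta \in \colie\{1\}(2)$ to the commutative product $\mu$ --- this is the standard $\comm$/$\lie$ swap under Koszul duality, visible for instance in the paper's conventions in Example~\ref{computation1}, where $p_2(\Delta_2)$ is the bracket and $\mu(\delta)$ is the product. Thus your sentence ``every contribution whose cooperation involves a Lie cobracket produces a bracket of generators and therefore vanishes, while the surviving contributions use only the subcooperad $\ccolie\{1\}$'' contradicts itself: the Lie cobracket $\delta$ \emph{is} the generator of $\colie\{1\}$, and it is precisely what survives. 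Once you swap the roles of $\Delta_2$ and $\delta$, the argument does go through: the composite $\ccoP_1\{1\} \xrightarrow{\kappa} \PP_1 \to \comm$ factors through the projection $\ccoP_1\{1\} \to \ccolie\{1\}$ (which is the cooperad map inducing $\mathbf{oblv}$ --- a projection, not the inclusion you invoke), and the quotient differential then matches the cobar differential of $\Omega_\comm(\Oblv\, C)$.
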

\begin{proof}
The diagram consists of left adjoints, so it suffices to check that the diagram of their right adjoints
\[\xymatrix{
\coalg_{\ccoP_1\{1\}}[W^{-1}_\text{Kos}] && \Alg_{\PP_1} \ar[ll]^(0.4){B_{\PP_1}} \\
\coalg_{\ccolie\{1\}}[W^{-1}_\text{Kos}] \ar[u]^{\mathbf{cofree}} && \Alg_\comm \ar[ll]^(0.4){B_\comm} \ar[u]^{\mathbf{triv}}
}\]
is also commutative. This diagram is induced by the corresponding diagram of 1-categories
\[\xymatrix{
\coalg_{\ccoP_1\{1\}} && \alg_{\PP_1} \ar[ll]^(0.4){B_{\PP_1}} \\
\coalg_{\ccolie\{1\}} \ar[u]^{\text{cofree}} && \alg_\comm \ar[ll]^(0.4){B_\comm} \ar[u]^{\text{triv}} & ,
}\]
all of whose arrows preserve weak equivalences. So it is enough to check commutativity of the latter, which easily follows from the 
definition of the bar construction.
\end{proof}

\begin{corollary}
Given a Poisson algebra $A \in \Alg_{\PP_1}$, its vanishing locus $\T^{\le 0}(A) \in \Alg_\comm$ is computed as
\begin{equation*}
\Omega_\comm \circ \mathbf{oblv} \circ B_{\PP_1} (A) \:.
\end{equation*}
\end{corollary}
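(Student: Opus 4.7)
The plan is to deduce this directly from the lemma above together with the bar-cobar adjoint equivalence. First I would apply the commutative square of the previous lemma to the object $B_{\PP_1}(A) \in \coalg_{\ccoP_1\{1\}}[W^{-1}_\text{Kos}]$. Going one way around the square produces $\T^{\le 0}(\Omega_{\PP_1} B_{\PP_1}(A))$, while going the other way produces $\Omega_\comm \circ \mathbf{oblv} \circ B_{\PP_1}(A)$, so commutativity of the square yields an equivalence between the two.

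Next I would invoke the fact, recalled in Section \ref{subsec23}, that the bar-cobar adjunction $\Omega_{\PP_1} \dashv B_{\PP_1}$ descends to an adjoint equivalence between $\coalg_{\ccoP_1\{1\}}[W^{-1}_\text{Kos}]$ and $\Alg_{\PP_1}$; in particular the counit map $\Omega_{\PP_1} B_{\PP_1}(A) \to A$ is an equivalence in $\Alg_{\PP_1}$. Substituting this into the previous equivalence gives
\begin{equation*}
\T^{\le 0}(A) \simeq \Omega_\comm \circ \mathbf{oblv} \circ B_{\PP_1}(A) \:,
\end{equation*}
as desired.

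There is no real obstacle here: the corollary is essentially a one-line consequence of the commutative square (which handles the naturality of the induction/forgetful switch between the two Koszul dualities) and the bar-cobar equivalence for $\PP_1$ (which lets us replace any Poisson algebra by the cobar of its bar). The only subtlety worth noting in the exposition is that the equivalence is taking place in the $\infty$-category $\Alg_\comm$, so both the application of the commutative square and the substitution of the counit equivalence need to be read up to quasi-isomorphism rather than on the nose.
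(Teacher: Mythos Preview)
Your proposal is correct and matches the paper's proof essentially word for word: both arguments use the counit equivalence $\Omega_{\PP_1} B_{\PP_1}(A) \simeq A$ and then apply the commutative square of the preceding lemma to swap $\T^{\le 0}\circ\Omega_{\PP_1}$ for $\Omega_\comm\circ\mathbf{oblv}$. The paper just compresses this into the single chain $\T^{\le 0}(A) \simeq \T^{\le 0}\circ\Omega_{\PP_1}\circ B_{\PP_1}(A) \simeq \Omega_\comm\circ\mathbf{oblv}\circ B_{\PP_1}(A)$.
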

\begin{proof}
\begin{equation*}
\T^{\le 0}(A) \simeq \T^{\le 0} \circ \Omega_{\PP_1} \circ B_{\PP_1} (A) \simeq \Omega_\comm \circ \mathbf{oblv} \circ B_{\PP_1} (A) \:.
\end{equation*}
\end{proof}

Now we would like to consider the following example: for a Lie algebra $\mathfrak{g} \in \Alg_\lie$, let us consider a Poisson algebra 
that $A = \sym_k(\mathfrak{g}) \in \Alg_{\PP_1}$.
We note that there is a commutative diagram
\[\xymatrix{
\sym_k(\mathfrak{g}) \ar@{}[r]|\in & \Alg_{\PP_1} \ar[r] & \Alg_\comm \\
\mathfrak{g} \ar@{|->}[u] \ar@{}[r]|\in & \Alg_\lie \ar[u]\ar[r] & \Alg_\text{Id} \ar[u] \ar@{}[r]|\cong & \Dg_k
}\]
which consists of inductions along the corresponding maps of operads. Therefore, the vanishing locus $\T^{\le 0}(\sym_k(\mathfrak{g}))$
is a free commutative algebra generated by $\Ind^\text{Id}_\lie(\mathfrak{g})$, whose computation is the purpose of the following lemma.
\begin{lemma}
There is a functorial equivalence
\begin{equation*}
\Ind^\mathrm{Id}_\lie(\mathfrak{g}) \cong \overline{\mathrm{C}}_*(\mathfrak{g})[-1] \:,
\end{equation*}
where $\overline{\mathrm{C}}_*(\mathfrak{g})$ is the reduced homological Chevalley-Eilenberg complex of $\mathfrak{g}$.
\end{lemma}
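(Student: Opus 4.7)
The plan is to mimic the strategy of the preceding lemma: compute the derived induction $\Ind^\mathrm{Id}_\lie(\mathfrak{g})$ by evaluating the non-derived functor on a cofibrant resolution coming from the bar--cobar adjunction of Section \ref{subsec23}, and then identify the resulting complex by inspection of its graded pieces. Concretely, take the canonical resolution $\Omega_\lie B_\lie(\mathfrak{g}) \to \mathfrak{g}$: by Section \ref{subsec23} this is a quasi-isomorphism in $\alg_\lie$, and the source is quasi-free, hence cofibrant.

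Next, observe that $\Ind^\mathrm{Id}_\lie$ carries free Lie algebras to their underlying complexes of generators. This follows by passing to left adjoints in the tautological identity $\mathbf{oblv}_\lie \circ \mathbf{triv} = \mathrm{id}_{\Dg_k}$ of right adjoints, where $\mathbf{triv} = \mathbf{oblv}^\lie_\mathrm{Id}$ equips a complex with the zero Lie bracket. Since $\Omega_\lie B_\lie(\mathfrak{g})$ is quasi-free on $\overline{B_\lie(\mathfrak{g})}$ as a graded Lie algebra, I obtain $\Ind^\mathrm{Id}_\lie(\mathfrak{g})$ as the complex $\overline{B_\lie(\mathfrak{g})}$ equipped with the differential inherited from the cobar differential after killing all brackets. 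On each cogenerator $c$ the cobar differential splits as the internal bar differential of $B_\lie(\mathfrak{g})$, which is linear in cogenerators and encodes the bracket of $\mathfrak{g}$ by the Chevalley--Eilenberg formula, plus a term built from formal Lie brackets of cogenerators coming from the cocomposition of $c$; the latter vanishes in $\Ind^\mathrm{Id}_\lie$, leaving exactly the CE differential. A degree count using $\ccomm\{1\}(n) \cong \sgn_n[n-1]$ from Section \ref{sec2} then identifies the underlying graded complex as $B_\lie(\mathfrak{g}) \cong \bigoplus_{n \ge 0} \La^n(\mathfrak{g})[n-1] \cong C_*(\mathfrak{g})[-1]$, and restriction to the reduced parts gives $\overline{B_\lie(\mathfrak{g})} \cong \overline{\mathrm{C}}_*(\mathfrak{g})[-1]$ as required.

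The main delicate step will be the identification of the induced differential: one has to verify that, after collapsing all formal Lie brackets, the residual piece of the cobar differential really is the classical CE differential with exactly the shift $[-1]$ stated, with no off-by-one error in signs or degrees. A conceptually cleaner alternative that avoids this direct bookkeeping is to package everything into a commutative square of Quillen adjunctions in the style of the preceding lemma, whose bottom row is a suitably shifted forgetful functor $\coalg_{\ccomm\{1\}}[W^{-1}_\text{Kos}] \to \Dg_k$; functoriality of left adjoints then pins down both the differential and the shift automatically.
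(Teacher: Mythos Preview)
Your proposal is correct and is essentially the same argument as the paper's, just unpacked more explicitly: the paper encodes your computation as the commutative square of right adjoints $\mathrm{B}_\lie \circ \mathrm{triv} \cong \mathrm{cofree} \circ \id$, passes to left adjoints to get $\Ind^{\mathrm{Id}}_\lie \circ \Omega_\lie \cong \Oblv$, and then reads off $\Ind^{\mathrm{Id}}_\lie(\mathfrak{g}) \cong \Oblv(\mathrm{B}_\lie(\mathfrak{g})) \cong \overline{\mathrm{C}}_*(\mathfrak{g})[-1]$ --- precisely the ``cleaner alternative'' you describe in your last paragraph. Your hands-on route through the bar--cobar resolution computes the same composite $\Ind^{\mathrm{Id}}_\lie \circ \Omega_\lie \circ \mathrm{B}_\lie$ by identifying the generators and the residual differential directly; this has the minor cost of the sign/degree bookkeeping you flag, which the diagrammatic argument avoids.
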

\begin{proof}

It follows from the definitions that there is a commutative square
\[\xymatrix{
\coalg_{\ccomm\{1\}} &&&& \alg_\lie \ar[llll]^{B_\lie} \\
\dg_k \ar[u]^{\text{cofree}} &&&& \dg_k \ar@{}[llll]|= \ar[u]^{\text{triv}} & .
}\]
Passing to the localizations first, and to the left adjoints after, we obtain a commutative square
\[\xymatrix{
\coalg_{\ccomm\{1\}}[W^{-1}_\mathrm{Kos}] \ar[d]^{\Oblv} \ar[rrrr]_\simeq^{\Omega_\lie} &&&& \Alg_\lie \ar[d]^{\Ind^\mathrm{Id}_\lie} \\
\Dg_k &&&& \Dg_k \ar@{}[llll]|= & ,
}\]
from which it follows that
\begin{equation*}
\begin{split}
\Ind^\mathrm{Id}_\lie(\mathfrak{g}) \cong \Ind^\mathrm{Id}_\lie \circ \Omega_\lie \circ B_\lie (\mathfrak{g}) &\cong \Oblv \circ B_\lie(\mathfrak{g}) \\
& \cong \overline{\mathrm{C}}_*(\mathfrak{g})[-1] \:.
\end{split}
\end{equation*}
The last isomorphism $\Oblv \circ B_\lie(\mathfrak{g}) \cong \overline{\mathrm{C}}_*(\mathfrak{g})[-1]$ is basically an avatar of
the definition of the Chevalley-Eilenberg complex.
\end{proof}
So the following holds for the vanishing locus of $\sym_k (\mathfrak{g})$
\begin{corollary}
There is a functorial isomorphism of commutative algebras
\begin{equation*}
\T^{\le 0}(\sym_k(\mathfrak{g})) \cong \sym_k(\overline{\mathrm{C}}_*(\mathfrak{g})[-1]) \:.
\end{equation*}
\end{corollary}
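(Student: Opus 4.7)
The plan is a short diagram chase that combines the commutative square of inductions displayed just above the corollary with the preceding lemma.

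First, I would note that the Poisson algebra $\sym_k(\mathfrak{g})$ is precisely the derived induction $\Ind^{\PP_1}_\lie(\mathfrak{g})$ along the natural map $\lie \to \PP_1$; this is the classical identification of the Kirillov--Kostant--Souriau Poisson algebra as the free Poisson algebra on $\mathfrak{g}$, and it is exactly the content of the left vertical arrow in the diagram displayed before the previous lemma.

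Next, consider the composite $\lie \to \PP_1 \to \comm$ of operad maps. Since $\PP_1 \to \comm$ sends the bracket to zero and $\lie$ is generated by its bracket, this composite factors as $\lie \to \mathrm{Id} \to \comm$, where the first arrow is the augmentation and the second is the unit. By composition of left adjoints (the displayed diagram expresses exactly this factorization), I obtain a functorial equivalence
\begin{equation*}
\T^{\le 0}(\sym_k(\mathfrak{g})) \;\simeq\; \Ind^\comm_{\PP_1} \Ind^{\PP_1}_\lie(\mathfrak{g}) \;\simeq\; \Ind^\comm_\mathrm{Id} \Ind^\mathrm{Id}_\lie(\mathfrak{g}) \;\simeq\; \sym_k\!\bigl(\Ind^\mathrm{Id}_\lie(\mathfrak{g})\bigr) \:,
\end{equation*}
using that induction along the unit map $\mathrm{Id} \to \comm$ is by definition the free commutative algebra functor $\sym_k(-)$.

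Finally, the previous lemma gives a functorial equivalence $\Ind^\mathrm{Id}_\lie(\mathfrak{g}) \simeq \overline{\mathrm{C}}_*(\mathfrak{g})[-1]$, whence
\begin{equation*}
\T^{\le 0}(\sym_k(\mathfrak{g})) \;\simeq\; \sym_k\!\bigl(\overline{\mathrm{C}}_*(\mathfrak{g})[-1]\bigr) \:,
\end{equation*}
functorially in $\mathfrak{g}$. There is essentially no obstacle at this step: the substantive work has already been done in the preceding lemma, which repackages the reduced Chevalley--Eilenberg complex as the forgetful image of the $\lie$-bar construction. The only thing worth double-checking in the write-up is the naturality assertion, which is automatic from the naturality of the adjunction unit and of the equivalence in the lemma.
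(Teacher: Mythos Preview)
Your proposal is correct and follows exactly the argument the paper sets up: the paper displays the commutative square of inductions, proves the lemma $\Ind^\mathrm{Id}_\lie(\mathfrak{g}) \simeq \overline{\mathrm{C}}_*(\mathfrak{g})[-1]$, and then states the corollary without further proof, leaving the reader to assemble the pieces just as you have done.
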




\section{Polyvectors and degenerate Poisson operads}\label{sec4}

\subsection{Polyvectors of a cgda}\label{sec4.1}

Let $A$ be a commutative dg-algebra over the base field $k$. There is a nice exposition in \cite{Melani_2018}, Section 2.2, on how to construct, using Koszul duality, a canonical resolution of $A$ as a mixed graded commutative dg-algebra and extract $A$'s cotangent and
tangent complexes from it, which we now would like to recall.
Let $\colie$ be the cooperad of Lie coalgebras. Recall that its suspension $\colie\{1\}$ is Koszul dual to the operad of non-unital 
commutative algebras $\comm^\text{nu}$. As a commutative algebra, $A^\epsilon$ is defined by the formula
\begin{equation*}
A^\epsilon = \sym^{\ge 1}_k(\colie\{1\}(A)) \:.
\end{equation*}
The grading on $A^\epsilon$ is induced by a grading on $\colie\{1\}$, according to which the weight of $\colie\{1\}(n)$ is set to be 
equal to $1-n$. The mixed differential on $A^\epsilon$ is defined as the bar-cobar differential, see \cite{LV}, Section 11.2.

The counit morphism $\colie\{1\} \longrightarrow \mathbf{I}$ induces a morphism of complexes
\begin{equation*}
\colie\{1\}(A) \longrightarrow A \:,
\end{equation*}
which in turn induces a morphism of graded commutative (non-unital) algebras
\begin{equation*}
\sym_k^{\ge 1}(\colie\{1\}(A)) \longrightarrow A \:.
\end{equation*}
It needs to be checked directly that the latter is in fact a morphism of mixed graded commutative algebras. Then we arrive at a
morphism of (non-unital) cdgas
\begin{equation}\label{canonicalResolution}
|A^\epsilon|^l \longrightarrow A \:.
\end{equation}
It is a standard fact about the bar-cobar resolution that (\ref{canonicalResolution}) is a quasi-isomorphism,
and it was proved in \cite{Melani_2018}, Lemma 2.4, that $|A^\epsilon|^l$ is a cofibrant cdga.

Next we recall the definitions of Harrison chain and cochain complexes, whose realizations model cotangent and tangent complexes of A 
respectively.
\begin{definition}
Harrison chain complex $\harr_\cdot(A,A) \in \text{Mod}_A^{\text{gr}, \epsilon}$ is defined by the formula
\begin{equation*}
\harr_\cdot(A,A) = \Omega^1_{A^\epsilon} \otimes_{A^\epsilon} A \cong A \otimes_k \colie\{1\}(A) \:.
\end{equation*}
Harrison cochain complex $\harr^\cdot(A,A) \in \text{Mod}_A^{\text{gr}, \epsilon}$ is defined by the formula
\begin{equation*}
\harr^\cdot(A,A) = \uhom_{\text{Mod}_A^{\text{gr}, \epsilon}} (\harr_\cdot(A,A), A) \cong \conv(\colie\{1\}, \en_A) \:,
\end{equation*}
where the mixed structure on the convolution algebra could be checked to coincide with the Lie bracket with the MC-element 
corresponding to the  multiplication on $A$.
\end{definition}

Let us note that morphism of graded mixed cdgas
\begin{equation*}
A^\epsilon \longrightarrow A
\end{equation*}
induces a morphism of $A$-modules
\begin{equation}\label{polyvectors::cotangent}
|\Omega^1_{A^\epsilon} \otimes_{A^\epsilon} A|^l \longrightarrow \Omega^1_A \:.
\end{equation}
Proposition 2.6 from \cite{Melani_2018} claims that the latter is a quasi-isomorphism provided that $A$ is a cofibrant cgda.
Applying $\uhom_A(\_, A)$ to the morphism (\ref{polyvectors::cotangent}), we get a chain of quasi-isomorphisms
\begin{equation*}
|\harr^\cdot(A,A)| \cong |\uhom_{\text{Mod}_A^{\text{gr}, \epsilon}} (\harr_\cdot(A,A), A)| \cong \uhom_A (|\harr_\cdot(A,A)|^l, A) \cong \der(A,A) \:.
\end{equation*}
Moreover, it can be checked, see \cite{Melani_2018}, Proposition 2.7, that the resulting quasi-isomorphism
\begin{equation*}
\der(A,A) \cong |\conv(\colie\{1\}, \en_A)|
\end{equation*}
respects the Lie algebra structures.

Now, given a cdga $A$, we want to construct a similar presentation for the graded Poisson algebra of polyvector fields on $A$.
Let us consider a mixed graded $A$-module
\begin{equation*}
A \otimes_k \cucoP_1\{1\}(A)
\end{equation*}
where the mixed structure on $A \otimes_k \cucoP_1\{1\}(A)$ comes from the bar differential on $\cucoP_1\{1\}(A)$ corresponding to
the Poisson structure on $A$ whose bracket is trivial. Since the Lie structure on $A$ is trivial, we get a chain
of isomorphisms of graded mixed complexes
\begin{equation*}
\begin{split}
A \otimes_k \cucoP_1\{1\}(A) &\cong A \otimes_k \cuccomm\{1\} \circ \colie\{1\}(A) \cong A \otimes_k \text{Sym}^{\ge 0}_k ( \colie\{1\}(A)[1] ) [-1] \cong \\
&\cong \text{Sym}^{\ge 0}_A (A \otimes_k \colie\{1\}(A)[1]) [-1] \cong \text{Sym}^{\ge 0}_A ( \harr_\cdot(A,A) [1] ) [-1] \:.
\end{split}
\end{equation*}
Passing to the duals and shifting by 1 to the right, we get an isomorphism of mixed graded complexes
\begin{equation}\label{poly-poly-of-cdga}
\sym_A^{\ge 0} (\harr^\cdot(A,A)[-1]) \cong \conv(\cucoP_1\{1\}, \en_A) [-1] \:,
\end{equation}
where the mixed structure on the RHS corresponds to the commutative algebra strucutre on $A$.

\begin{theorem}[Higher Hochschild-Kostant-Rosenberg Theorem, Theorem 2 of \cite{Calaque_2015}]\label{Calaque-Willwacher-thm}
Under the action of the right realization, isomorphism (\ref{poly-poly-of-cdga}) becomes an isomorphism of homotopy
$\PP_2^\nu$-algebras, where the homotopy $\PP_2^\nu$-structure on the right-hand side is given by the
Tamarkin-Calaque-Willwacher map (\ref{basics-tamarkin-map}).
\end{theorem}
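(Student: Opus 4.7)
The plan is to reduce the statement to checking the matching of the homotopy $\PP_2^\nu$-operations on a generating set, exploiting the fact that $\Omega(\coP_2\{2\})$ is quasi-free on $\overline{\coP_2\{2\}}[-1]$. Since the Tamarkin-Calaque-Willwacher map $\Omega(\coP_2\{1\}) \to \br_{\cucoP_1}$ has been described explicitly on generators in items (1)--(4) of Section \ref{subsec24}, the verification reduces to a case-by-case analysis for each class of generator. Combined with the already established isomorphism of the underlying mixed graded complexes in \eqref{poly-poly-of-cdga}, matching the generating operations is then enough to conclude.

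First I would identify the right realization of the left-hand side $\sym_A^{\ge 0}(\harr^\cdot(A,A)[-1])$ with the classical algebra of polyvector fields $\sym_A^{\ge 0}(\TT_A[-1])$, equipped with its strict $\PP_2^\nu$-structure given by the commutative product and the Schouten bracket. This uses the quasi-isomorphism $|\harr^\cdot(A,A)| \simeq \der(A,A)$ for cofibrant $A$, recalled earlier in Section \ref{sec4.1}, together with the compatibility of the right realization with $\sym_A^{\ge 0}(-)$. On the right-hand side, the homotopy $\PP_2^\nu$-structure comes from the action of $\br_{\cucoP_1}$ on the twisted convolution algebra, pulled back along the shifted Tamarkin map $\Omega(\coP_2\{2\}) \to \br_{\cucoP_1}\{1\}$.

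Next I would verify the binary operations on both sides. The image of the generator $X_1 \wedge X_2 \in \ccomm\{1\}(2) \subset \coP_2\{1\}(2)$ in $\br_{\cucoP_1}$ is the antisymmetrized Lie bracket; once twisted by the Maurer-Cartan element corresponding to the multiplication on $A$, this reproduces the Schouten bracket of polyvector fields under \eqref{poly-poly-of-cdga}. Dually, the image of $\underline{X_1 X_2} \in \colie(2) \subset \coP_2\{1\}(2)$ is the tree with internal root of arity two labelled by the shifted Lie cobracket, and a direct computation shows that, after twisting by the commutative MC element on $A$ and applying \eqref{poly-poly-of-cdga}, this operation becomes the symmetric product of Harrison cochains, i.e.\ the commutative product of polyvector fields.

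Finally, the higher generators $X_1 \wedge \underline{X_2 \ldots X_k}$ and $\underline{X_1 \ldots X_k}$ with $k \ge 3$ must be shown to produce, via the Tamarkin map and the $\br_{\cucoP_1}$-action, operations that are compatible with the strict $\PP_2^\nu$-structure on the left-hand side. This is the main obstacle: unwinding the $\br_{\cucoP_1}$-action on a generic element of the twisted convolution algebra and recognizing the output as an iterated Schouten-type expression demands careful sign and symmetry bookkeeping on labelled trees. Following Calaque-Willwacher, I would handle this by invoking the Dolgushev-Willwacher operadic twisting formalism, which packages the MC twist into a coherent $\br_{\cucoP_1}$-action and reduces the remaining verifications to computations on trees of small arity. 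Once those are settled, the mixed graded isomorphism \eqref{poly-poly-of-cdga} automatically upgrades, after right realization, to the desired isomorphism of homotopy $\PP_2^\nu$-algebras.
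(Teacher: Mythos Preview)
The paper does not prove this statement at all: it is quoted verbatim as Theorem~2 of \cite{Calaque_2015} and used as a black box. There is therefore no proof in the paper to compare your proposal against; the authors simply cite the result and move on to Section~\ref{sec4.2}.

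That said, your sketch is in the right spirit but is more a restatement of what needs to be checked than an actual argument. Two points are worth flagging. First, your ``reduction to generators'' is where all the content lies: the nontrivial part of the Calaque--Willwacher proof is precisely that the $\br_{\cucoP_1}$-action, after twisting by the commutative MC element, collapses in the right way on the convolution algebra so that the higher Tamarkin generators act compatibly with the strict structure on polyvector fields. Saying you would ``invoke the Dolgushev--Willwacher operadic twisting formalism'' and that this ``reduces the remaining verifications to computations on trees of small arity'' is exactly the step that requires work; it is not something one can wave through. Second, your identification of the left-hand side after right realization with the strict $\PP_2^\nu$-algebra $\sym_A^{\ge 0}(\TT_A[-1])$ presupposes that the Schouten bracket on symmetric powers of Harrison cochains is already known to model the strict Schouten bracket---this is part of what is being asserted, not an input. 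If you actually want to supply a proof here, you should either reproduce the relevant portion of \cite{Calaque_2015} or give a precise pointer to it; as written, your proposal is a plausible outline but not a proof.
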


\subsection{Polyvectors of an operad}\label{sec4.2}

Observations made in the previous section suggest the following definition.
\begin{definition}
Given an operad $\O$ equipped with a morphism
\begin{equation*}
\comm \longrightarrow \O \:,
\end{equation*}
we define a $\omp$-algebra
\begin{equation*}
\cpol(\O) \in \alg_\omp
\end{equation*}
as the (shifted) convolution algebra
\begin{equation}\label{poly-conv-alg}
\conv(\cucoP_1\{1\}, \O)[-1] \simeq \prod_{n \ge 0} \uhom_{S_n} (\cucoP_1\{1\}(n), \O(n))[-1]
\end{equation}
twisted by a Maurer-Cartan element coming from the composite
\begin{equation*}
\PP_1^\text{nu} \longrightarrow \comm^\text{nu} \longrightarrow \comm \longrightarrow \O \:,
\end{equation*}
$\omp$-algebra structure on (\ref{poly-conv-alg}) is given by the Tamarkin-Calaque-Willwacher map (\ref{basics-tamarkin-map}).
\end{definition}
Let us note that, because the Maurer-Cartan element has only ``degree'' $2$ component, for each $k \ge 3$,
generators of the from
\begin{equation*}
\begin{tikzpicture}[baseline = -5]
\node[ext] (v2) at (-1,0.5) {$1$};
\node[ext] (v3) at (1,0.5) {$k$};
\node (v0) at (0,-0.5) {};
\node (vv) at (0,0.5) {$\cdots$};
\node[int] (v1) at (0,0) {};
\draw (v0) edge (v1);
\draw (v2) edge (v1);
\draw (v3) edge (v1);
\end{tikzpicture}
\end{equation*}
act on the convolution algebra (\ref{poly-conv-alg}) by zero. In particular, the commutative product on $\cpol(\O)$ is strictly 
associative. The rest of the $\omp$-structure on $\cpol(\O)$ 
consists of a strict Lie bracket and witnesses of the bracket being a homotopy bi-derivation with respect to the product, see
the description of the map (\ref{basics-tamarkin-map}).

The fact that the Lie bracket on $\cpol(\O)$ is strict will play a significant role in our constructions, so we remark that
$\omp$-action on $\cpol(\O)$ factors through the push-out
\[\xymatrix{
\lie\{1\} \ar[r] & \tP\ar@{}[dl]|(0.2)\invpb \\
\Om(\ccomm\{2\}) \ar[r]\ar[u] & \omp\ar[u] & ,
}\]
and, from now on, we will write
\begin{equation*}
\cpol(\O) \in \alg_{\tP} \:.
\end{equation*}
(Note that $\tP$ is also quasi-isomorphic to $\PP_2^\nu$. )

\begin{definition}
Note that, because $\cucoP_1 \cong \ccomm^\text{cu} \circ \colie$, we have
\begin{equation*}
\begin{split}
& \uhom_{S_n} (\cucoP_1\{1\}(n), \O(n)) \cong \\
&\cong \prod_{k \ge 0} \prod_{\stackrel{i_1 + \ldots + i_k = n}{i_1, \ldots, i_k}} \uhom_{S_k \ltimes S_{i_1} \times \ldots \times S_{i_k}} (\ccomm^\text{cu}\{1\}(k) \otimes_k \colie\{1\}(i_1) \otimes_k \ldots \otimes_k \colie\{1\}(i_k), \O(n)) \:,
\end{split}
\end{equation*}
and
\begin{equation*}
\begin{split}
& \prod_{n\ge 0} \uhom_{S_n} (\cucoP_1\{1\}(n), \O(n)) \cong \\
&\cong \prod_{k \ge 0} \prod_{i_1, \ldots, i_k} \uhom_{S_k \ltimes S_{i_1} \times \ldots \times S_{i_k}} (\ccomm^\text{cu}\{1\}(k) \otimes_k \colie\{1\}(i_1) \otimes_k \ldots \otimes_k \colie\{1\}(i_k), \O(i_1 + \ldots + i_k)) \:.
\end{split}
\end{equation*}
We define $\pol(\O) \subset \cpol(\O)$ to be the subcomplex
\begin{equation*}
\bigoplus_{k \ge 0} \prod_{i_1, \ldots, i_k} \uhom_{S_k \ltimes S_{i_1} \times \ldots \times S_{i_k}} (\ccomm^\text{cu}\{1\}(k) \otimes_k \colie\{1\}(i_1) \otimes_k \ldots \otimes_k \colie\{1\}(i_k), \O(i_1 + \ldots + i_k))
\end{equation*}
shifted by 1 to the right and equipped with an additional grading by that $k$.
\end{definition}
\begin{lemma}
The action of $\tP$ on $\cpol(\O)$ preserves the subcomplex $\pol(\O) \subset \cpol(\O)$. Moreover, the grading on $\omp$, which
we defined in Notations and Conventions at the end of Introduction, uniquely induces grading on $\tP$, and we claim that the resulting 
action of $\tP$ on $\pol(\O)$ is compatible with the gradings.
\end{lemma}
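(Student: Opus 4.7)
The plan is to reduce both claims to a uniform generator-by-generator check. A generator $\alpha$ of $\omp$ is an element of $\overline{\coP_2\{2\}}(n)[-1]$ coming from a piece $\ccomm\{2\}(k_\alpha)\otimes\colie\{2\}(i_1)\otimes\cdots\otimes\colie\{2\}(i_{k_\alpha})$, and its weight, by the convention in the Notations, is $1-k_\alpha$. I want to check that each such $\alpha$ acts on $\cpol(\O)$ by shifting the $\ccomm^{cu}$-arity grading by exactly $1-k_\alpha$. This single statement implies both assertions at once: it restricts the action from the product $\cpol(\O)=\prod_k\pol(\O)^{(k)}$ to the direct sum $\pol(\O)=\bigoplus_k\pol(\O)^{(k)}$, because a graded operation takes finitely-supported inputs to finitely-supported outputs; and it gives the grading compatibility. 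The passage from $\omp$ to $\tP$ is then automatic: on $\lie\{1\}$ the Lie bracket sits in weight $-1$, matching its image in $\omp$, so the pushout inherits a unique compatible grading.

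For the generator-by-generator check I would use the explicit form of the Tamarkin--Calaque--Willwacher map (\ref{basics-tamarkin-map}). By item~(4) of that list, all other generators are sent to zero, so I only need to consider the three non-trivial families in items (1)--(3) of Section~\ref{subsec24}. Each of these maps to a labeled tree in $\br_{\cucoP_1}$ whose action on $\conv(\cucoP_1\{1\},\O)$ is built from three ingredients: the infinitesimal cocomposition $\Delta_{(1)}$ of $\cucoP_1$, the Hopf product on $\cucoP_1$, and (for black trees) the insertion of the Maurer--Cartan twisting element $\mu$, which lives in $\ccomm^{cu}$-arity $2$. Under the identification $\cucoP_1 \cong \ccomm^{cu}\circ\colie$, both the Hopf product and $\Delta_{(1)}$ interact predictably with the $\ccomm^{cu}$-arity grading: the Hopf product is additive in $\ccomm^{cu}$-arity, while $\Delta_{(1)}$ splits a $\ccomm^{cu}$-arity $K$ element either within a single $\colie$-block (producing factors of arities $(K,1)$) or across the commutative part (producing two factors whose arities sum to $K+1$).

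With this bookkeeping in hand, I would verify the three cases. The Lie cogenerator $\underline{X_1\ldots X_k}$ (weight $0$) maps to a black tree whose action, via insertion of $\mu$ and the $\prelie$-formalism, is additive in $\ccomm^{cu}$-arity; for $k=2$ this recovers the strict commutative product on polyvectors, and for $k\ge 3$ the analogous higher operation. The cocommutative cogenerator $X_1\wedge X_2$ (weight $-1$) maps to the antisymmetrized pre-Lie tree, namely the convolution Lie bracket; here a direct check shows that both flavors of $\Delta_{(1)}$-splitting force the input $c$ to have $\ccomm^{cu}$-arity exactly $k_1+k_2-1$ for the pairing with $f\otimes g$ to be nonzero (in the within-$\colie$ case one of the factor arities must be $1$, in the across-$\ccomm$ case the factor arities sum to $K+1$), so $\{f,g\}$ lies uniformly in $\pol(\O)^{(k_1+k_2-1)}$. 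The mixed generator $X_1\wedge\underline{X_2\ldots X_k}$ (weight $-1$) maps to a white tree with a $\colie$-type label at the root and $k-1$ child slots, and the same analysis yields shift $-1$.

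I expect the main obstacle to be the middle case: reconciling the two a priori independent $\Delta_{(1)}$-contributions to the Lie bracket and confirming that they deliver a single, well-defined $\ccomm^{cu}$-arity shift rather than two. Once this is in hand, compatibility with the cobar differential on $\omp$ is immediate, as that differential is built from $\Delta_{(1)}$ of $\coP_2\{2\}$ itself, which respects the $\ccomm$--$\colie$ decomposition and hence the induced grading on the generators.
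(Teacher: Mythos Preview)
Your proposal is correct and follows essentially the same generator-by-generator strategy as the paper, which also checks the Lie bracket, the product, and the distributivity witnesses separately; the only difference is bookkeeping---the paper factors each degree as (bare $\prelie$-corolla degree) $+$ (weight contributed by the Hopf label or inserted MC argument) rather than tracking $\Delta_{(1)}$ through $\ccomm^{\mathrm{cu}}\circ\colie$ as you do. One small correction: for $k\ge 3$ the black trees from item~(1) act by zero on $\cpol(\O)$ because the MC element sits only in operadic arity~$2$---this is precisely what makes the commutative product strict and allows the passage to $\tP$---so there is no ``analogous higher operation'' to check. Your worry about the bracket case resolves itself exactly as you outline: both $\Delta_{(1)}$-splittings of a $\ccomm^{\mathrm{cu}}$-arity~$K$ element yield factors whose arities sum to $K+1$, hence the shift is uniformly $-1$; the paper simply asserts this follows ``from the explicit formula.''
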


\begin{proof}
First, it follows from the explicit formula that the Lie bracket restricts to the subcomplex $\pol(\O) \subset \cpol(\O)$, and moreover
has degree $-1$ with respect to the grading.

Second, the commutative product on 
$\cpol(\O)$ is given by the $\prelie$-operation
\begin{equation*}
\begin{tikzpicture}[baseline = -5]
\node (v0) at (0,-0.5) {};
\node[ext] (v1) at (0,0) {$1$};
\node[ext] (v2) at (-0.5, 0.5) {$2$};
\node[ext] (v3) at (0.5, 0.5) {$3$};
\draw (v0) edge (v1);
\draw (v1) edge (v2);
\draw (v1) edge (v3);
\end{tikzpicture}
\end{equation*}
in which we insert the composite
\begin{equation*}
\begin{split}
\cucoP_1\{1\}(2) \cong \ccomm^\text{cu}\{1\}(2) \oplus \colie\{1\}(2) &\stackrel{s^{-1}\delta*-}{\longrightarrow} \ccomm^\text{cu}\{1\}(2)[-1] \oplus \colie\{1\}(2) [-1] \longrightarrow \\ 
&\longrightarrow \colie\{1\}(2)[-1] \cong \comm(2) \subset \O(2)
\end{split}
\end{equation*}
as the first argument. The $\prelie$-operation itself has degree $-2$
with respect to the grading, while the first argument of the $\prelie$-operation has weight 2;
this results in the commutative product on $\pol(\O)$ having degree $0$.

Finally, one argues similarly for all the distributivity witnesses, which, up to the Hopf action, are given by $\prelie$-operations, 
which preserve the grading and have degrees equal to 1 minus their arity. For example, the generator
\begin{equation}\label{poly-generator-proof-grading}
X_1 \wedge \underline{X_2 X_3} \in \coP_2\{2\}(3)
\end{equation}
acts by the $\prelie$-operation
\begin{equation*}
\begin{tikzpicture}[baseline = -5]
\node (v0) at (0,-0.5) {};
\node[ext] (v1) at (0,0) {$1$};
\node[ext] (v2) at (-0.5, 0.5) {$2$};
\node[ext] (v3) at (0.5, 0.5) {$3$};
\draw (v0) edge (v1);
\draw (v1) edge (v2);
\draw (v1) edge (v3);
\end{tikzpicture}
\end{equation*}
labeled by the element $s^{-1}\underline{X_2 X_3} \in \colie\{1\}(2)[-1]$. The Hopf multiplication with the element 
$s^{-1}\underline{X_2 X_3}$ has weight 1, which, together with the fact that the $\prelie$-operation has weight -2, implies that the 
generator (\ref{poly-generator-proof-grading}) preserves the grading and has weight -1.

\end{proof}

\begin{definition}\label{poly-def-pol-gr}
We denote by
\begin{equation*}
\pol(\O) \in \alg^\gr_\tP
\end{equation*}
the resulting graded $\tP$-algebra. In addition, sending $\O \in \op_{\comm/}$ into $\pol(\O) \in \alg^\gr_\tP$ defines a functor
\begin{equation}\label{poly-fun-pol-gr}
\pol(-) \colon \op_{\comm/} \to \alg^\gr_\tP \:.
\end{equation}
\end{definition}

\begin{proposition}\label{main-prop}
The functor (\ref{poly-fun-pol-gr}) admits a left adjoint $\f \colon \alg^\gr_\tP \longrightarrow \op_{\comm/}$, and the resulting 
adjunction
\begin{equation*}
\f \colon \alg^\gr_\tP \rightleftarrows \op_{\comm/} : \pol(-)
\end{equation*}
is a Quillen adjunction.
\end{proposition}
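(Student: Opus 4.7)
The plan is to invoke the adjoint functor theorem. Both $\op_{\comm/}$ and $\alg^\gr_\tP$ are locally presentable (being, respectively, a slice of a category of algebras over an admissible monad on $\dg_k$, and itself such an algebra category together with a formal $\ZZ$-grading decoration). So it suffices to show that $\pol(-)$ preserves small limits and is accessible; once the adjoint $\f$ is obtained, the Quillen property will need to be checked separately.

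For limit preservation, I would work on underlying graded complexes. Since $\tP$ is an operad on $\dg_k$ and the grading is only a decoration, limits in $\alg^\gr_\tP$ are created by the forgetful functor to graded complexes. The underlying graded complex of $\pol(\O)$ is a direct sum, indexed by $k \ge 0$, of products of pieces of the shape
\begin{equation*}
\uhom_{S_k \ltimes S_{i_1} \times \ldots \times S_{i_k}} \bigl( \ccomm^\text{cu}\{1\}(k) \otimes_k \colie\{1\}(i_1) \otimes_k \ldots \otimes_k \colie\{1\}(i_k),\, \O(i_1+\ldots+i_k) \bigr),
\end{equation*}
each of which is manifestly a limit-preserving functor of $\O \in \op_{\comm/}$; products and direct sums of limit-preserving functors preserve limits. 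The $\tP$-algebra structure itself (strict Lie bracket, strict commutative product, and distributivity witnesses given by the Tamarkin-Calaque-Willwacher map) is assembled from the cooperad and Hopf structure on $\cucoP_1\{1\}$, which is fixed data, together with the Maurer-Cartan element coming from the structure morphism $\comm \to \O$. Morphisms in $\op_{\comm/}$ preserve this structure map by definition, hence send MC element to MC element, so the twisted $\tP$-structure is natural in $\O$. Accessibility of $\pol(-)$ follows from accessibility of $\uhom$'s and of (co)limits, so the adjoint functor theorem produces the left adjoint $\f$.

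For the Quillen property, I would take the standard model structures in which fibrations are arity-wise surjections and weak equivalences are quasi-isomorphisms on both sides (in $\alg^\gr_\tP$ both classes are created by the forgetful functor to graded complexes). Since $\mathrm{char}(k)=0$, taking $G$-invariants for a finite group $G$ is exact, so each functor $\uhom_G(V,-) \colon \dg_k \to \dg_k$ preserves surjections and quasi-isomorphisms; products of surjections and of quasi-isomorphisms of complexes are again such. Therefore $\pol(-)$ preserves fibrations and acyclic fibrations on underlying graded complexes, and hence in $\alg^\gr_\tP$, which is exactly the Quillen condition for the right adjoint.

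The main obstacle is the second paragraph: verifying carefully that the $\tP$-algebra structure on $\pol(\O)$ really is natural in $\O \in \op_{\comm/}$ at the point-set level, including the interaction of the Maurer-Cartan twisting with the generators coming via the map $\Omega(\coP_2\{1\}) \to \br_{\cucoP_1}$. Once this naturality is pinned down, everything else (limit preservation, accessibility, and the Quillen condition) is formal, using only that we are in characteristic zero and that both model structures are created on underlying complexes.
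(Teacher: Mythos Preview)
Your adjoint functor theorem argument for the existence of $\f$ is fine and is a reasonable alternative to the paper's explicit construction (the paper builds $\f^\nu(\gamma)$ by hand as an operad generated over $\comm^\nu$ by operations $g^c$ indexed by elements $g\in\gamma$ and cooperations $c$, which is needed later to identify $\F(k[x])\simeq\PP_1$). The naturality of the $\tP$-structure that you flag as the main obstacle is in fact not the subtle point.

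The real gap is in your Quillen argument. You write that each $\uhom_G(V,\O(n))$ preserves quasi-isomorphisms and that products of quasi-isomorphisms are quasi-isomorphisms, and conclude that $\pol(-)$ preserves acyclic fibrations. But the differential on $\pol(\O)$ is not the product of the internal differentials on the pieces $\uhom_{S_n}(\cucoP_1\{1\}(n),\O(n))$: it is the \emph{twisted} differential $d+[\mu,-]$, where $\mu$ is the Maurer--Cartan element coming from $\comm\to\O$, and the operator $[\mu,-]$ mixes arities. So knowing that each factor individually is sent to a quasi-isomorphism does not, without further argument, tell you that the twisted total complex is. Your proof establishes the claim for the associated graded (by arity), not for the actual object.

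This is precisely what the paper spends its effort on. It observes that $[\mu,-]$ raises arity by one, so the completed object is the right realization of a mixed graded complex $\bigoplus_n \uhom_{S_n}(\cucoP_1\{1\}(n),\O(n))$ with mixed structure $[\mu,-]$; since realization $|-|\colon \dg_k^{\gr,\epsilon}\to\dg_k$ preserves quasi-isomorphisms, the conclusion follows. Equivalently, you could fix your argument by filtering $\pol(\O)$ by arity (a complete Hausdorff decreasing filtration for which $[\mu,-]$ strictly raises filtration) and invoking the Eilenberg--Moore comparison: your computation shows the map on associated graded is a quasi-isomorphism, and completeness of the filtration upgrades this to the twisted complex. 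Without some such step, the argument is incomplete.
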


Recall that both of the categories are equipped with projective model structures, whose equivalences and fibrations
are created by the forgetful functors to the symmetric sequences and the graded complexes correspondingly, see \cite{Hinich}.

Let us note that the functor $\pol(-)$ factors through the forgetful functor
\begin{equation*}
\op_{\comm/} \longrightarrow \op_{\comm^\nu/} \:.
\end{equation*}
We denote by $\pol^\nu(-)$ the resulting functor
\begin{equation*}
\pol^\nu(-) \colon \op_{\comm^\nu/} \longrightarrow \alg^\gr_\tP \:.
\end{equation*}

\begin{proof}[Proof of Proposition \ref{main-prop}.]
First, we note that it suffices to prove the claim for the functor $\pol^\nu(-)$. It has a left
adjoint $\f^\nu$, which admits the following description. Let $\gamma \in \alg^\gr_\tP$ be a graded $\tP$-algebra. The unit map
\begin{equation*}
\gamma \longrightarrow \pol^\nu(\f^\nu(\gamma))
\end{equation*}
sends an element $g \in \gamma$ of weight degree $w$ into a map
\begin{equation*}
\ccomm^\text{cu}\{1\}(w) \otimes_{S_w} \bigoplus_{i_1, \ldots, i_w} \mathrm{Ind}^{S_{i_1 + \ldots + i_w}}_{S_{i_1} \times \ldots \times S_{i_w}} \colie\{1\}(i_1) \otimes_k \ldots \otimes_k \colie\{1\}(i_w) \longrightarrow \bigoplus_{i_1, \ldots, i_w}\f^\nu(\gamma) (i_1 + \ldots i_w)[1] \:,
\end{equation*}
which corresponds to a family of operations
\begin{equation*}
g^c \in \f^\nu(\gamma), \quad c \in \ccomm^\text{cu}\{1\}(w) \otimes_{S_w} \bigoplus_{i_1, \ldots, i_w} \mathrm{Ind}^{S_{i_1 + \ldots + i_w}}_{S_{i_1} \times \ldots \times S_{i_w}} \colie\{1\}(i_1) \otimes_k \ldots \otimes_k \colie\{1\}(i_w) \:.
\end{equation*}
Let $\{g_i\}_i \subset \gamma$ be a family of homogeneus 
generators of algebra $\gamma$. Operad $\f^\nu(\gamma)$ could be described as the operad freely generated over $\comm^\nu$
by operations $\{g_i^c\}_{i,c}$  modulo relations on $g_i \in \gamma$ rewritten operadicaly as relations on the operations $g_i^c$.
Heuristically, for an element $g \in \gamma$ of weight degree $w$, there is an anti-symmetric homotopy multi-derivation
\begin{equation*}
g^{\Delta_w} \colon \cuccomm\{1\}(w) \longrightarrow \f^\nu (\gamma) (w)[1] \:,
\end{equation*}
whose distributivity witnesses are given all the other $g^c \in \f^\nu(\gamma)$.

Because it is clear that the functor $\pol^\nu(-)$ preserves fibrations, it remains to prove that it preserves quasi-isomorphisms. 
Because the forgetful functor
\begin{equation*}
\alg_\tP^\gr \longrightarrow \dg_k^\gr
\end{equation*}
is conservative and preserves quasi-isomorphisms, it suffices to prove that the composite
\begin{equation*}
\op_{\comm^\nu/} \longrightarrow \alg_\tP^\gr \longrightarrow \dg_k^\gr
\end{equation*}
preserves quasi-isomorphisms. Moreover, completion
\[\xymatrix{
\dg_k^\gr \ar[rr] && \dg_k \\
\oplus_i M(i) \ar@{}[u]|\invertin \ar@{|->}[rr] && \prod_i M(i) \ar@{}[u]|\invertin
}\]
is conservative and preserves quasi-isomorphisms. So it suffices to prove that the composite
\begin{equation*}
\op_{\comm^\nu/} \longrightarrow \alg_\tP^\gr \longrightarrow \dg_k^\gr \longrightarrow \dg_k
\end{equation*}
preserves quasi-isomorphisms. The latter maps an object $\mu \colon \comm^\nu \to \O$ into a complex
\begin{equation*}
\left( \prod_{n\ge0} \uhom_{S_n} (\cucoP_1\{1\}(n), \O(n))[-1], \quad d + [\mu, -] \right) \:.
\end{equation*}
Let us note that the MC-element $\mu$ is given by
\begin{equation*}
\begin{split}
\cucoP_1\{1\}(2) &\twoheadrightarrow \colie\{1\}(2) \cong \colie(2) \otimes_k \sgn_{S_2}[1] \cong \\
&\cong \triv_{S_2}[1] \cong \comm(2)[1] \stackrel{\mu}{\longrightarrow} \O(2)[1] \:;
\end{split}
\end{equation*}
in particular, it preserves the differential, and the Maurer-Cartan equation simplifies to
\begin{equation*}
[\mu, \mu] = 0 \:.
\end{equation*}
From this we conclude that the functor
\begin{equation*}
\op_{\comm^\mathrm{nu}/} \to \dg_k
\end{equation*}
factors over the realization functor
\begin{equation*}
|-| \colon \dg^{\text{gr,}\epsilon}_k \longrightarrow \dg_k \:;
\end{equation*}
the object $\mu \colon \comm^\nu \to \O$ is mapped into a graded complex $\oplus_{n \ge 0} \uhom_{S_n} (\cucoP_1\{1\}(n), \O(n))$ 
equipped with mixed
differential $[\mu, -]$. It remains to recall that the realization preserves quasi-isomorphisms, see \cite{CPTVV}, Section 1.3, or
\cite{Melani_2018}, Section 1.2.

\end{proof}

\begin{notation}
The adjunction from Proposition \ref{main-prop} induces an adjunction
\begin{equation*}
\F \colon \Alg^\gr_{\PP^\nu_2} \simeq \alg^\gr_{\tP}[W^{-1}_\mathrm{qis}] \rightleftarrows \op_{\comm/}[W^{-1}_\mathrm{qis}] : \Pol(-)
\end{equation*}
between $\infty$-categories. Also, there is a ``non-unital'' variant of this adjunction, which we write with a superscript $\nu$:
\begin{equation*}
\F^\nu \colon \Alg^\gr_{\PP^\nu_2} \rightleftarrows \op_{\comm^\nu/}[W^{-1}_\mathrm{qis}] : \Pol^\nu \:.
\end{equation*}
\end{notation}

\subsection{Degenerate Poisson operads}

It remains to make a few final remarks before we can define the degenerate Poisson operads. Let
\begin{equation*}
\Free^{\PP_2^\nu}_{\lie\{1\}} (k(2)[-2]) \in \Alg^\gr_{\PP_2^\nu}
\end{equation*}
denote the free graded
$\PP^\text{nu}_2$-algebra generated by a trivial one-dimensional graded Lie algebra $k(2)[-1]$ sitting in weight degree 2 and 
cohomological degree 1.
\begin{proposition}\label{poisson-operad}
There is an equivalence of operads
\begin{equation*}
\PP_1 \simeq \F(\Free^{\PP_2^\nu}_{\lie\{1\}} (k(2)[-2])) \:,
\end{equation*}
compatible with the structure maps from $\comm$.
\end{proposition}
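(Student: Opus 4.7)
The plan is to apply the Yoneda lemma in $\op_{\comm/}[W_{\mathrm{qis}}^{-1}]$: I show that $\PP_1$ and $\F(\Free^{\PP_2^\nu}_{\lie\{1\}}(k(2)[-2]))$ corepresent the same functor on $\op_{\comm/}$. For any $\O \in \op_{\comm/}$, chaining the adjunction $\F \dashv \Pol$ of Proposition \ref{main-prop} with the free--forget adjunction along the operad map $\lie\{1\} \to \PP_2^\nu$ yields
\begin{equation*}
\map_{\op_{\comm/}}\bigl(\F(\Free^{\PP_2^\nu}_{\lie\{1\}}(k(2)[-2])), \O\bigr) \simeq \map_{\Alg^\gr_{\lie\{1\}}}\bigl(k(2)[-2], \Pol(\O)\bigr).
\end{equation*}
Since $k(2)[-2]$ is the trivial one-dimensional graded $\lie\{1\}$-algebra concentrated at weight $2$ and cohomological degree $2$, the right-hand side is precisely the Maurer--Cartan space of the underlying shifted graded Lie algebra of $\Pol(\O)$ at that bidegree.

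On the other hand, Melani's Theorem recalled in the Introduction identifies $\map_{\op_{\comm/}}(\PP_1, \O)$ with $\map_{\lie_k^\gr}(k(2)[-1], \mathcal{MD}(\O))$, which is the Maurer--Cartan space of $\mathcal{MD}(\O)$ in the analogous bidegree (after matching the shift between $\lie$ and $\lie\{1\}$). The proposition therefore reduces to producing a natural equivalence between these two Maurer--Cartan spaces that lives over $\map_\op(\comm, \O)$.

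The crux, and the main obstacle, is precisely this comparison. Both $\Pol(\O)$ and $\mathcal{MD}(\O)$ sit inside the twisted convolution algebra $\conv(\cucoP_1\{1\}, \O)[-1]$, the twist being given by the commutative structure $\comm \to \O$; however, $\Pol$ is built to be homotopy-invariant while $\mathcal{MD}$ is not. The cleanest route is constructive: the Lie bracket of $\PP_1$ provides a canonical Maurer--Cartan element of $\Pol(\PP_1)$ in the required bidegree, which adjoints to a comparison morphism
\begin{equation*}
\F(\Free^{\PP_2^\nu}_{\lie\{1\}}(k(2)[-2])) \longrightarrow \PP_1
\end{equation*}
in $\op_{\comm/}$. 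To conclude that this morphism is an equivalence, one verifies --- using Melani's Theorem together with the fact that $\Pol$ is set up precisely as the derived enhancement of $\mathcal{MD}$, as evidenced by Theorem \ref{Calaque-Willwacher-thm} --- that it induces equivalences on all derived mapping spaces into arbitrary $\O \in \op_{\comm/}$. Yoneda then yields the desired equivalence, and naturality of the whole chain in $\O$ guarantees compatibility with the structure maps from $\comm$, since both mapping spaces fiber naturally over $\map_\op(\comm, \O)$.
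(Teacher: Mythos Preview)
Your Yoneda strategy is reasonable in outline, but the argument has a genuine gap at the decisive step. After the two adjunction computations you correctly arrive at having to compare
\[
\map_{\Alg^\gr_{\lie\{1\}}}\bigl(k(2)[-2],\Pol(\O)\bigr)
\quad\text{and}\quad
\map_{\lie^\gr}\bigl(k(2)[-1],\mathcal{MD}(\O)\bigr)
\]
naturally in $\O$. You then assert this comparison holds ``because $\Pol$ is set up precisely as the derived enhancement of $\mathcal{MD}$, as evidenced by Theorem~\ref{Calaque-Willwacher-thm}''. This is where the argument breaks down. Theorem~\ref{Calaque-Willwacher-thm} only concerns $\O=\en_A$ for a cdga $A$; it says nothing about a general $\O\in\op_{\comm/}$. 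More importantly, the paper explicitly notes that $\mathcal{MD}$ does \emph{not} preserve quasi-isomorphisms, so there is no general weak equivalence $\mathcal{MD}(\O)\simeq\Pol(\O)[1]$ of graded Lie algebras to invoke. The equivalence of the two Maurer--Cartan spaces you need is not a known input; it is essentially \emph{equivalent} to the proposition you are trying to prove. As written, the argument is circular at exactly the point where the content lies.

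The paper avoids this by not appealing to Melani's theorem at all. It works on the non-unital side first, takes Melani's explicit cofibrant resolution $Q(k(2)[-1])$ of the trivial graded Lie algebra, and \emph{computes} the operad $\f^\nu_\lie(Q(k(2)[-1]))$ directly: it identifies its underlying graded operad with the pushout $\comm^\nu\leftarrow\Omega(\colie\{1\})\to\Omega(\coP_1\{1\})$ and checks the differentials match (cf.\ Example~\ref{computation1}). That pushout is already known to be a cofibrant model of $\PP_1^\nu$, whence $\F^\nu(\Free^{\PP_2^\nu}_{\lie\{1\}}(k(2)[-2]))\simeq\PP_1^\nu$. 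Lemma~\ref{push-out-square} then upgrades this to the unital statement. In other words, the paper proves the ``hard'' comparison you black-boxed by an explicit identification of resolutions, not by a formal comparison between $\Pol$ and $\mathcal{MD}$. If you want to salvage the Yoneda approach, you would still need to supply precisely this computation (or an independent argument that the two Maurer--Cartan spaces agree for every $\O$), at which point you have essentially reproduced the paper's proof.
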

The following lemma will help us to reduce the question to the ``non-unital'' one.
\begin{lemma}\label{push-out-square}
The square
\[\xymatrix{
\comm \ar[r] & \PP_1 \\
\comm^{\text{nu}} \ar[r]\ar[u] & \PP_1^{\text{nu}} \ar[u]
}\]
is quasi-isomorphic to a homotopy push-out square.
\end{lemma}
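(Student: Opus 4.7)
The plan is to reduce the claim, via the (curved) Koszul duality recalled in Section \ref{subsec24}, to a pushout statement about cooperads, where it becomes essentially tautological. The starting observation is that one has four cofibrant quasi-free resolutions $\Omega(\colie\{1\}) \simeq \comm^\nu$, $\Omega(\coP_1\{1\}) \simeq \PP_1^\nu$, $\Omega(\ccolie\{1\}) \simeq \comm$ and $\Omega(\ccoP_1\{1\}) \simeq \PP_1$, the last two being instances of the curved cobar construction.

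Next, I would assemble from these data a commutative square of curved conilpotent cooperads
\[\xymatrix{
\colie\{1\} \ar[d] \ar[r] & \coP_1\{1\} \ar[d] \\
\ccolie\{1\} \ar[r] & \ccoP_1\{1\}
}\]
whose horizontal arrows are the cooperad maps dual to the operadic inclusions $\comm^\nu \hookrightarrow \PP_1^\nu$ and $\comm \hookrightarrow \PP_1$, and whose vertical arrows are the coproduct-summand inclusions coming from the decompositions $\ccolie\{1\} \cong \colie\{1\} \oplus \coE_0\{-1\}$ and $\ccoP_1\{1\} \cong \coP_1\{1\} \oplus \coE_0\{-1\}$ recalled in Section \ref{subsec24}. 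By construction, applying the cobar functor $\Omega$ to this square yields, up to quasi-isomorphism, the square of the lemma.

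The crux of the argument is then to verify that this cooperad square is a pushout. The right-hand vertical arrow is obtained from the left-hand one by adjoining the same conilpotent cooperad $\coE_0\{-1\}$ as a coproduct summand, and the curvings on each side are defined by the same formula involving only the Lie cobracket and the functional from $\coE_0$, all of which live in the common Lie sub-cooperad. The universal property of the coproduct therefore gives
\begin{equation*}
\coP_1\{1\} \sqcup_{\colie\{1\}} \ccolie\{1\} \cong \coP_1\{1\} \oplus \coE_0\{-1\} \cong \ccoP_1\{1\} \:,
\end{equation*}
and this identification is compatible with differentials and curvings.

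To finish, I would use that the cobar construction $\Omega$, being left adjoint to the bar construction, preserves colimits; hence the square of quasi-free operads obtained by applying $\Omega$ is itself a pushout. All four corners are cofibrant, and the horizontal maps are cofibrations, being inclusions of quasi-free operads generated by a sub-symmetric-sequence of the generators. Therefore this pushout coincides with the homotopy pushout, and combined with the quasi-isomorphisms to $\comm^\nu$, $\comm$, $\PP_1^\nu$, $\PP_1$ this exhibits the original square as quasi-isomorphic to a homotopy pushout square. The main obstacle is the crux step: one must check carefully that the identification of $\ccoP_1\{1\}$ with $\coP_1\{1\} \oplus \coE_0\{-1\}$ is compatible with that of $\ccolie\{1\}$ with $\colie\{1\} \oplus \coE_0\{-1\}$ at the level of curved conilpotent cooperads, not merely as underlying graded symmetric sequences, which requires unwinding the explicit description of the curving given in Section \ref{subsec24}.
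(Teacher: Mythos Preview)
Your strategy has a genuine gap at the ``crux step''. The symbol $\oplus$ in the description $\ccolie\{1\} \cong \colie\{1\} \oplus \coE_0\{-1\}$ and $\ccoP_1\{1\} \cong \coP_1\{1\} \oplus \coE_0\{-1\}$ from Section~\ref{subsec24} denotes the \emph{product} in conilpotent cooperads (``the conilpotent cooperad cogenerated by\ldots''), not the coproduct. Your pushout computation
\[
\coP_1\{1\} \sqcup_{\colie\{1\}} \ccolie\{1\} \cong \coP_1\{1\} \oplus \coE_0\{-1\}
\]
treats $\oplus$ as a coproduct, so the identification with $\ccoP_1\{1\}$ does not follow. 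Concretely, $\overline{\ccoP_1\{1\}}$ contains cooperations such as $(u\otimes u)\circ\Delta_2$ in arity~$0$, obtained by mixing the commutative cobracket with the $\coE_0$-functional; these are present precisely because $\oplus$ is the cogenerated product, and they are \emph{not} produced by the pushout of the span $\coP_1\{1\}\leftarrow\colie\{1\}\to\ccolie\{1\}$.

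This is not a minor bookkeeping issue: it is exactly why the paper does not argue as you do. The paper forms the strict pushout
\[
\P := \Omega(\ccolie\{1\}) \sqcup_{\Omega(\colie\{1\})} \Omega(\coP_1\{1\})
\]
directly at the level of operads and observes that $\P$ is quasi-free on $\overline{\ccolie\{1\}}[-1] \oplus (\overline{\ccomm\{1\}}\circ\colie\{1\})[-1]$, which is a \emph{proper} sub-symmetric-sequence of $\overline{\ccoP_1\{1\}}[-1]$. Thus $\P \neq \Omega(\ccoP_1\{1\})$ on the nose. The paper then constructs a split injection $\P \hookrightarrow \Omega(\ccoP_1\{1\})$ and checks, by hand, that it is a quasi-isomorphism (via surjectivity of the composite $\P\to\PP_1$). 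If your cooperad square were a genuine pushout and $\Omega$ preserved it, you would get $\P = \Omega(\ccoP_1\{1\})$, contradicting this. So the content of the lemma lives in comparing $\P$ with $\Omega(\ccoP_1\{1\})$, and that comparison cannot be sidestepped by a formal colimit argument.
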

\begin{proof}[Proof of the lemma.]

There is a diagram
\[\xymatrix{
\Omega(\ccolie\{1\}) && \\
\Omega(\colie\{1\}) \ar[r]\ar[u] & \Omega(\coP_1\{1\}) & \quad,
}\]
whose maps are the obvious ones defined on the generators,
that resolves the diagram
\[\xymatrix{
\comm && \\
\comm^{\text{nu}} \ar[r]\ar[u] & \PP^{\text{nu}}_1 & .
}\]
Let us denote push-out of the former diagram by $\P$. Operad $\P$ is a quasi-free operad generated by the symmetric sequence
\begin{equation*}
\overline{\ccolie\{1\}}[-1] \oplus ( \overline{\ccomm\{1\}} \circ \colie\{1\}[-1] )
\end{equation*}
and equipped with a cobar-type differential. In particular, there is a split injection
\begin{equation}\label{map-between-poisson-resolutions}
\P \longrightarrow \Omega(\ccoP_1\{1\}) \: :
\end{equation}
on the generators the splitting is given by the obvious map
$\overline{\ccoP_1\{1\}}[-1] \longrightarrow \P$ and it is straightforward to check that the resulting map of operads is compatible 
with the differential. In addition, one could also check that the composite
\begin{equation*}
\P \longrightarrow \Omega(\ccoP_1\{1\}) \stackrel{\simeq}{\longrightarrow} \PP_1
\end{equation*}
is surjective, which amounts to checking that $\P$ has enough generators. We conclude that the morphism
(\ref{map-between-poisson-resolutions}) is a quasi-isomorphism. Finally, we note that
the push-out square
\[\xymatrix{
\Omega(\ccolie\{1\}) \ar[r] & \P \\
\Omega(\colie\{1\}) \ar[r]\ar[u] & \Omega(\coP_1\{1\}) \ar[u]
}\]
is in fact a homotopy push-out square -- all the objects are cofibrant and all the maps are cofibrations. The latter follows from
an explicit description of cofibrations in the model category of operads, see \cite{Hinich}.
This concludes the proof of Lemma
\ref{push-out-square}.
\end{proof}

\begin{proof}[Proof of Proposition \ref{poisson-operad}.]
First, let us prove that
$\F^\text{nu}(\Free^{\PP_2^\nu}_{\lie\{1\}} (k(2)[-2])) \simeq \PP^\text{nu}_1$. 
Let us denote by $\f^\nu_\lie$ the left adjoint to the composite
\begin{equation*}
\op_{\comm^\nu/} \stackrel{\pol^\nu}{\longrightarrow} \alg^\gr_\tP \stackrel{\oblv^{\lie\{1\}}_\tP}{\longrightarrow} \alg^\gr_{\lie\{1\}} \stackrel{(-)[1]}{\longrightarrow} \alg^\gr_\lie \:.
\end{equation*}
The functor $\f^\nu_\lie$ is a left Quillen functor which maps a graded Lie algebra into the operad generated over $\comm^\nu$ by
homotopy multi-derivations corresponding to elements of the Lie algebra modulo relations that come from the Lie bracket and 
the differential of the Lie algebra.

Recall from \cite{Melani}, Section 4, that Lie algebra $k(2)[-1]$ has a cofibrant resolution
$Q(k(2)[-1])$ which is freely generated by elements $p_i$ for $i = 2, 3, 4, \ldots$, such that $p_i$ has cohomological degree $1$ 
and weight degree $i$; the differential acts on the generators by the formula
\begin{equation*}
d p_n = \sum_{i + j = n+1} \frac{1}{2}[p_i, p_j] \:.
\end{equation*}
The map
\begin{equation*}
Q(k(2)[-1]) \longrightarrow k(2)[-1]
\end{equation*}
sends $p_2$ to the generator of $k(2)[-1]$ and the other generators to zero.

We get an equivalence
\begin{equation*}
\F^\nu(\Free^{\PP_2^\nu}_{\lie\{1\}} (k(2)[-2])) \simeq \f_\lie^\nu (Q(k(2)[-1])) \:,
\end{equation*}
and the latter is freely generated over $\comm^\nu$ by the following set of operations:
for each $n \ge 2$, there is a family of operations $p_n^c$ labeled by cooperations
\begin{equation*}
c \in \ccomm\{1\}(w) \otimes_{S_w} \bigoplus_{i_1, \ldots, i_w} \mathrm{Ind}^{S_{i_1 + \ldots + i_w}}_{S_{i_1} \times \ldots \times S_{i_w}} \colie\{1\}(i_1) \otimes_k \ldots \otimes_k \colie\{1\}(i_w) \:;
\end{equation*}
arity of the operation $p_n^c$ is equal to arity of the cooperation $c$, the symmetric group acts on $\{p_n^c\}_c$ via acting on the 
labels.
In particular, we see that the underlying graded operad of $\f^\nu_\lie(Q(k(2)[-1]))$ is isomorphic to the underlying graded operad
of the push-out of the diagram
\begin{equation}\label{poly-diag-1}
\xymatrix{
\comm^\nu & \\
\Omega(\colie\{1\}) \ar[u]\ar[r] & \Omega(\coP_1\{1\})& .
}
\end{equation}
It is a straightforward computation that this isomorphism is compatible with the differential; to give an idea of how the computation
is performed, in Example \ref{computation1}, we compute action of the differential of $F^\nu_\lie(Q(k(2)[-1]))$ on a few simple generators.
On the other hand, the push-out of the 
diagram (\ref{poly-diag-1})
is a cofibrant over $\comm^\nu$ model of $\PP^\nu_1$; let us denote this model $\overline{P}{}^\nu$. So far, we proved that
\begin{equation*}
\F^\text{nu}(\Free^{\PP_2^\nu}_{\lie\{1\}} (k(2)[-2])) \simeq F^\nu_{\lie}(Q(k(2)[-1])) \cong \overline{P}{}^\nu \simeq \PP^\nu_1 \:.
\end{equation*}

Now let us denote by $\overline{P}$ the push-out of the diagram
\[\xymatrix{
\comm & \\
\comm^\nu \ar[r]\ar[u] & \overline{P}{}^\nu &.
}\]
It needs to be proved that $\overline{P}$ is quasi-isomorphic to $\PP_1$. But it follows from the definitions and the proof of 
Lemma \ref{push-out-square} that the operad $\overline{P}$ fits into push-out square
\[\xymatrix{
\comm \ar[r] & \overline{P} \ar@{}[dl]|(0.25)\invpb \\
\Omega(\colie^\theta\{1\}) \ar[r]\ar[u] & \P \ar[u] & .
}\]
\end{proof}

\begin{example}\label{computation1}
To simplify the notation, in this example, we would like to denote the operad $F^\nu_\lie(Q(k(2)[-1]))$ by $\O$. Here our goal will
be to compute how the differential $d_\O$ acts on a few simple generators. Recall that $\O$ is quasi-freely generated over $\comm^\nu$ 
by the symmetric sequence
\begin{equation*}
\left( \coP_1\{1\} / \colie\{1\} \right) [1] \:.
\end{equation*}
The unit map
\begin{equation*}
Q(k(2)[-1]) \longrightarrow \pol^\nu(\O)
\end{equation*}
maps a generator $p_n \in Q(k(2)[-1])$ into a map
\begin{equation*}
\ccomm\{1\}(n) \otimes_{S_n} \bigoplus_{i_1, \ldots, i_n} \mathrm{Ind}^{S_{i_1 + \ldots + i_n}}_{S_{i_1} \times \ldots \times S_{i_n}} \colie\{1\}(i_1) \otimes_k \ldots \otimes_k \colie\{1\}(i_n) \longrightarrow \bigoplus_{i_1, \ldots, i_n}\O(i_1 + \ldots i_n)[1] \:,
\end{equation*}
which we also denote by $p_n$; this map sends a cooperation into the corresponding generator of $\O$. Also, recall that we have a map
\begin{equation*}
\mu \colon \colie\{1\}(2) \longrightarrow \O(2)[1]
\end{equation*}
that corresponds to the commutative product $\comm^\nu(2) \subset \O(2)$; the differential on $\pol^\nu(\O)$ involves taking the bracket
with $\mu$.
Let
\begin{equation*}
\Delta_2, \delta \in \coP_1\{1\}(2)
\end{equation*}
denote the cocommutative cobracket and the Lie cobracket correspondingly. Also, let $\Delta_3 \in \coP_1\{1\}(3)$ denote the cooperation
\begin{equation*}
\Delta_3 \stackrel{\define}{=} \Delta_2 \circ_1 \Delta_2 \in \ccomm\{1\}(3) \subset \coP_1\{1\}(3) \:.
\end{equation*}
\begin{enumerate}
\item [1)] $p_2(\Delta_2) \in \O(2)$ is an anti-symmetric binary operation of cohomological degree 0.
\begin{equation*}
d_\O (p_2(\Delta_2)) = (dp_2 + [\mu, p_2]) (\Delta_2) = [\mu, p_2] (\Delta_2) = 0 \:.
\end{equation*}
\item[2)] $p_3(\Delta_3) \in \O(3)$ is an anti-symmetric operation of cohomological degree -1.
\begin{equation*}
\begin{split}
d_\O(p_3(\Delta_3)) &= (dp_3 + [\mu, p_3]) (\Delta_3) = (\frac{1}{2}[p_2, p_2] + [\mu, p_3]) (\Delta_3) = \\ 
&= \frac{1}{2}[p_2, p_2] (\Delta_3) = \gamma_\O \circ (p_2 \otimes p_2) \circ \overline{\Delta}_{(1)} (\Delta_3) = \\
&= \gamma_\O \circ (p_2 \otimes p_2) ( \Delta_2 \circ_1 \Delta_2 + (\Delta_2 \circ_1 \Delta_2)^{(2 3 1)} + (\Delta_2 \circ_1 \Delta_2)^{(3 1 2)} ) = \\
&= p_2(\Delta_2) \circ_1 p_2(\Delta_2) + (p_2(\Delta_2) \circ_1 p_2(\Delta_2))^{(231)} + (p_2(\Delta_2) \circ_1 p_2(\Delta_2))^{(312)} \:.
\end{split}
\end{equation*}
\item[3)] $p_2 \left(\begin{tikzpicture}[baseline = -5]
\node (v1) at (0.5,0.5) {};
\node[ext] (v2) at (-0.5,0.5) {$\delta$};
\node[ext] (v3) at (0, 0) {$\Delta_2$};
\node (vr) at (0, -0.7) {};
\draw (v1) edge (v3);
\draw (v2) edge (v3);
\draw (v3) edge (vr);
\node (a1) at (-0.7, 1) {};
\node (a2) at (-0.3, 1) {};
\node (a3) at (0.5, 1) {};
\draw (a1) edge (v2);
\draw (a2) edge (v2);
\end{tikzpicture}\right) \in \O(3)$ is an operation of cohomological degree $-1$.
\begin{equation*}
\begin{split}
d_\O (p_2 \left(\begin{tikzpicture}[baseline = -5]
\node (v1) at (0.5,0.5) {};
\node[ext] (v2) at (-0.5,0.5) {$\delta$};
\node[ext] (v3) at (0, 0) {$\Delta_2$};
\node (vr) at (0, -0.7) {};
\draw (v1) edge (v3);
\draw (v2) edge (v3);
\draw (v3) edge (vr);
\node (a1) at (-0.7, 1) {};
\node (a2) at (-0.3, 1) {};
\node (a3) at (0.5, 1) {};
\draw (a1) edge (v2);
\draw (a2) edge (v2);
\end{tikzpicture}\right) ) &= (dp_2 + [\mu, p_2]) \left(\begin{tikzpicture}[baseline = -5]
\node (v1) at (0.5,0.5) {};
\node[ext] (v2) at (-0.5,0.5) {$\delta$};
\node[ext] (v3) at (0, 0) {$\Delta_2$};
\node (vr) at (0, -0.7) {};
\draw (v1) edge (v3);
\draw (v2) edge (v3);
\draw (v3) edge (vr);
\node (a1) at (-0.7, 1) {};
\node (a2) at (-0.3, 1) {};
\node (a3) at (0.5, 1) {};
\draw (a1) edge (v2);
\draw (a2) edge (v2);
\end{tikzpicture}\right) = [\mu, p_2] \left(\begin{tikzpicture}[baseline = -5]
\node (v1) at (0.5,0.5) {};
\node[ext] (v2) at (-0.5,0.5) {$\delta$};
\node[ext] (v3) at (0, 0) {$\Delta_2$};
\node (vr) at (0, -0.7) {};
\draw (v1) edge (v3);
\draw (v2) edge (v3);
\draw (v3) edge (vr);
\node (a1) at (-0.7, 1) {};
\node (a2) at (-0.3, 1) {};
\node (a3) at (0.5, 1) {};
\draw (a1) edge (v2);
\draw (a2) edge (v2);
\end{tikzpicture}\right) = \\
&= \gamma_\O \circ (\mu \otimes p_2 + p_2 \otimes \mu) \circ \overline{\Delta}_{(1)} \left(\begin{tikzpicture}[baseline = -5]
\node (v1) at (0.5,0.5) {};
\node[ext] (v2) at (-0.5,0.5) {$\delta$};
\node[ext] (v3) at (0, 0) {$\Delta_2$};
\node (vr) at (0, -0.7) {};
\draw (v1) edge (v3);
\draw (v2) edge (v3);
\draw (v3) edge (vr);
\node (a1) at (-0.7, 1) {};
\node (a2) at (-0.3, 1) {};
\node (a3) at (0.5, 1) {};
\draw (a1) edge (v2);
\draw (a2) edge (v2);
\end{tikzpicture}\right) =\\
&= \gamma_\O \circ (\mu \otimes p_2 + p_2 \otimes \mu) ( \Delta_2 \circ_1 \delta + (\delta \circ_1 \Delta_2)^{(231)} + (\delta \circ_1 \Delta_2)^{(312)} ) =\\
&= p_2(\Delta_2) \circ_1 \mu(\delta) + ( \mu(\delta) \circ_1 p_2(\Delta_2) )^{(231)} + ( \mu(\delta) \circ_1 p_2(\Delta_2) )^{(312)} \:.
\end{split}
\end{equation*}
\end{enumerate}
\end{example}

\begin{notation}
Let 
\begin{equation*}
C \stackrel{\define}{=} \sym^{\ge 1}_k(k(2)[-2]) \in \alg^\gr_{\comm^\nu}
\end{equation*} denote the free non-unital graded commutative algebra generated by one element of weight degree $2$ and
cohomological degree $2$. Let us denote the generator by $x$. We will denote by $C_{\le m}$ the quotient
\begin{equation*}
C / (x^{m+1})
\end{equation*}
of $C$ by the ideal $(x^{m+1}) \subset C$. 
\end{notation}

\begin{definition}\label{main-def}
Recall that there is a functor
\begin{equation*}
\Triv^{\PP^\nu_2}_{\comm^\nu} = \Oblv^{\PP^\nu_2}_{\comm^\nu} \colon \Alg^\gr_{\comm^\nu} \longrightarrow \Alg^\gr_{\PP^\nu_2}
\end{equation*}
that is the right adjoint corresponding to the map of operads
\begin{equation*}
\PP^\nu_2 \longrightarrow \comm^\nu \:.
\end{equation*}
This functor equips a commutative algebra with a trivial bracket.
We define the $m$-degenerate Poisson operad $\PP_1^{\le m} \in \op_{\comm/}$ by the formula
\begin{equation*}
\PP_1^{\le m} = \F( \Triv^{\PP^\nu_2}_{\comm^\nu} (C_{\le m}) ) \:,
\end{equation*}
The operad $\PP_1^{\le m}$ receives a map from
\begin{equation*}
\begin{split}
\PP_1 &\simeq \F(\Free^{\PP^\nu_2}_{\lie\{1\}} \circ \Triv^{\lie\{1\}} (k(2)[-2]) ) \simeq \F( \Triv^{\PP_2^\nu}_{\comm^\nu} \circ \Free^{\comm^\nu} (k(2)[-2]) ) \\
&\simeq \F( \Triv^{\PP^\nu_2}_{\comm^\nu}(C)) \:.
\end{split}
\end{equation*}
\end{definition}

\begin{remark}
By definition, the $m$-degenerate Poisson operad satisfies a universal property
\begin{equation*}
\map_{\op_{\comm/}} (\PP_1^{\le m}, \O) \simeq \map_{\Alg^\gr_{\PP^\nu_2}} ( \Triv^{\PP^\nu_2}_{\comm^\nu} (C_{\le m}) , \Pol(\O)) \:,
\end{equation*}
which, in particular, similarly to \cite{Melani}, Theorem 3.2, computes us the space of $m$-degenerate Poisson structures on a 
(differential-graded) commutative algebra in terms of the polyvectors on the latter.
\end{remark}

\subsection{A simplicial resolution of $\PP^{\le m}_1$}

In this section we would like to describe a simplicial resolution of the operad $\PP_1^{\le m}$. Let $k[x] \in \alg_\comm^\gr$
denote the unital polynomial ring in $1$ variable $x$ of cohomological degree $2$ and weight degree $2$. Let us also denote by
$k[y] \subset k[x]$ its subring given by $y \mapsto x^{m+1}$. Recall that there is a simiplicial resolution
\begin{equation*}
k[x]/x^m \cong k[x] \otimes^\mathrm{L}_{k[y]} k \stackrel{\cong}{\longleftarrow} | k[x] \quad\substack{\longleftarrow \\ \longleftarrow}\quad k[x]\otimes_k k[y] \quad\substack{\longleftarrow \\ \longleftarrow \\ \longleftarrow}\quad k[x]\otimes_k k[y]\otimes_k k[y] \quad\substack{\longleftarrow \\ \longleftarrow \\ \longleftarrow \\ \longleftarrow}\quad \ldots |
\end{equation*}
of the factor algebra $k[x] / x^{m+1}$. Moreover, units of the terms of this resolution assemble to a split map
\[\xymatrix{
k\ar[d] & | k\ar[d] \ar[l]_\cong & k\ar[d] \ar@<0.5ex>[l] \ar@<-0.5ex>[l] & k\ar[d] \ar[l]\ar@<1ex>[l]\ar@<-1ex>[l] & \ldots &| \\
k[x]/(x^{m+1}) & | k[x] \ar[l]_(0.4)\cong & k[x]\otimes_k k[y] \ar@<0.5ex>[l]\ar@<-0.5ex>[l] & k[x]\otimes_kk[y]\otimes_kk[y] \ar[l]\ar@<1ex>[l]\ar@<-1ex>[l] & \ldots &|
}\]
of simplicial objects. Cokernel of this map gives us a simplicial resolution
\[\xymatrix{
C_{\le m} & | C_m^{(0)} \ar[l]_\cong & C^{(1)}_m \ar@<0.5ex>[l]\ar@<-0.5ex>[l] & C^{(2)}_m \ar[l]\ar@<1ex>[l]\ar@<-1ex>[l] & \ldots &| &,
}\]
where $C_m^{(i)}$ denotes free non-unital commutative algebra generated by weight graded complex
\begin{equation}\label{model-generating-complex}
k(2)[-2] \oplus \underbrace{k(2m+2)[-2m-2] \oplus \ldots \oplus k(2m+2)[-2m-2]}_i \quad.
\end{equation}
Because functors $\F$ and $\Triv^{\PP^\nu_2}_{\comm^\nu}$ preserve geometric realizations, the simplicial resolution of $C_{\le m}$
gives rise to a simplicial resolution
\[\xymatrix{
\PP_1^{\le m} & | \F ( \Triv^{\PP^\nu_2}_{\comm^\nu} ( C_m^{(0)} )) \ar[l]_(0.7)\cong & \F ( \Triv^{\PP^\nu_2}_{\comm^\nu} ( C^{(1)}_m )) \ar@<0.5ex>[l]\ar@<-0.5ex>[l] & \ar[l]\ar@<1ex>[l]\ar@<-1ex>[l] & \ldots &|
}\]
of the $m$-degenerate Poisson operad. Let us denote by
\begin{equation*}
L_m^{(i)} \stackrel{\define}{=} \triv^{\lie\{1\}} (k(2)[-2] \oplus \underbrace{k(2m+2)[-2m-2] \oplus \ldots \oplus k(2m+2)[-2m-2]}_i) \in \alg^\gr_{\lie\{1\}}
\end{equation*}
weight graded complex (\ref{model-generating-complex}) equipped with a trivial $1$-shifted Lie bracket. We note that the
equivalence
\begin{equation*}
\Free^{\PP_2^\nu}_{\lie\{1\}} \circ \Triv^{\lie\{1\}} \simeq \Triv^{\PP_2^\nu}_{\comm^\nu} \circ \Free^{\comm^\nu} \:,
\end{equation*}
which produces an equivalence of algebras
\begin{equation*}
\Free^{\PP_2^\nu}_{\lie\{1\}} (L_m^{(i)}) \simeq \Triv^{\PP_2^\nu}_{\comm^\nu} ( C_m^{(i)} ) \:,
\end{equation*}
allows us to rewrite the resolution of $\PP_1^{\le m}$ as
\[\xymatrix{
\PP_1^{\le m} & | \F_\lie ( L_m^{(0)}[1] ) \ar[l]_(0.6)\cong & \F_\lie  ( L^{(1)}_m[1] ) \ar@<0.5ex>[l]\ar@<-0.5ex>[l] & \ar[l]\ar@<1ex>[l]\ar@<-1ex>[l] & \ldots &| &.
}\]
\begin{remark}\label{model-term-operad}
It is possible to describe the terms of the resolution explicitly: $\lie\{1\}$-algebra $L_m^{(i)}$ has a bar-cobar resolution
$\widetilde{L_m^{(i)}}$, which is a semi-free $\lie\{1\}$-algebra generated by
\begin{equation*}
\ccomm(L_m^{(i)}) = \bigoplus_{n \ge 1} \sym_k^n (L_m^{(i)})
\end{equation*}
and equipped with a differential that acts on the generators by the formula
\begin{equation}\label{model-term-differential}
\sym_k^{n} (L_m^{(i)}) \stackrel{\Delta}{\longrightarrow} \bigoplus_{\substack{p + q = n \\ p,q \ge 1}} \sym_k^p(L_m^{(i)}) \otimes_k \sym_k^q(L_m^{(i)}) \stackrel{[-,-]}{\longrightarrow} \lie\{1\}(\ccomm(L_m^{(i)})) \:.
\end{equation}
Moreover, the weight grading on $L_m^{(i)}$ uniquely extends to $\widetilde{L_m^{(i)}}$ under the condition that the differential
(\ref{model-term-differential}) preserves the weight grading and the Lie bracket has weight degree $-1$. So we obtain that
\begin{equation*}
\F_\lie(L_m^{(i)}[1]) \simeq \f_\lie (\widetilde{L_m^{(i)}}[1]) \:,
\end{equation*}
and the latter has an ``explicit'' description: it is the operad semi-freely generated over $\comm$
by the anti-symmetric homotopy multi-derivations that correspond to the elements of
\begin{equation*}
\bigoplus_{n \ge 1} \sym_k^n (L_m^{(i)})
\end{equation*}
together with their homotopy coherences; the operad is equipped with a differential that is induced by
(\ref{model-term-differential}).
\end{remark}
Let us note that it follows from this description that each of the terms of the resolution sits in non-positive cohomological
degrees, which implies that $\PP_1^{\le m}$ itself sits in non-positive cohomological degrees. In addition, the resolution allows us
to compute the operad $H^0(\PP_1^{\le m})$.
\begin{proposition}\label{model-classical-operad}
Operad $H^0(\PP_1^{\le m})$ is isomorphic to factor of $\PP_1$ by an operadic ideal generated by the anti-symmetrization of the
operation
\begin{equation*}
(\ldots (\mu_{m+1} \circ_{m+1} l) \ldots \circ_1 l) \in \PP_1(2m+2) \:,
\end{equation*}
where $\mu_{m+1} \in \PP_1(m+1)$ denotes the $m+1$-arity commutative product and $l \in \PP_1(2)$ denotes the Lie bracket.
\end{proposition}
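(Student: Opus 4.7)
The plan is to exploit the simplicial resolution $\PP_1^{\le m} \simeq |\F_\lie(L_m^{(\bullet)}[1])|$ constructed just above. Since each term $\F_\lie(L_m^{(i)}[1])$ sits in non-positive cohomological degrees, the functor $H^0(-)$ commutes with this geometric realization, and so
\[
H^0(\PP_1^{\le m}) \;\cong\; \mathrm{coeq}\Bigl(H^0(d_0),\,H^0(d_1)\colon H^0(\F_\lie(L_m^{(1)}[1])) \rightrightarrows H^0(\F_\lie(L_m^{(0)}[1]))\Bigr)
\]
in the ordinary category $\op_{\comm/}$.

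The zeroth term equals $\PP_1$ by Proposition~\ref{poisson-operad}. For the first term, I would appeal to Remark~\ref{model-term-operad} together with the cohomological-degree count illustrated by Example~\ref{computation1}: a generator of weight $w$ and Lie-algebra cohomological degree $d$ in the cofibrant resolution of $L_m^{(1)}[1]$ produces a principal antisymmetric $w$-ary operation of cohomological degree $d + 1 - w$. In particular, $x \in L_m^{(1)}[1]$ (weight $2$, degree $1$) produces the Lie bracket $l \in \PP_1(2)$ in degree $0$, and $y \in L_m^{(1)}[1]$ (weight $2m+2$, degree $2m+1$) produces a $(2m+2)$-ary antisymmetric operation $Y$ also in degree $0$. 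The remaining generators of the bar-cobar resolution either give operations of strictly negative cohomological degree, or are of the Melani $p_n$-type whose differentials enforce the usual $\PP_1$-relations between $l$ and the $\comm$-structure, together with the Leibniz multi-derivation property of $Y$ with respect to the $\comm$-structure; none of them produce new cocycles or cross-relations in cohomological degree $0$. Consequently $H^0(\F_\lie(L_m^{(1)}[1]))$ is identified with $\PP_1$ freely extended over $\comm$ by the single antisymmetric multi-derivation $Y$.

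The two face maps originate from the commutative-algebra maps $d_0, d_1 \colon C_m^{(1)} \to C_m^{(0)}$ recalled right before the statement: $d_1$ sends $y$ to $0$ while $d_0$ sends $y$ to $x^{m+1}$. Transporting these through $\Triv^{\PP_2^\nu}_{\comm^\nu}$ and then through $\F$, we see that $H^0(d_1)(Y) = 0$, whereas $H^0(d_0)(Y)$ is the image of $x^{m+1} \in \Triv(C_m^{(0)})$ under the unit map $\Triv(C_m^{(0)}) \to \Pol(\PP_1)$ of the adjunction $\F \dashv \Pol$. By the Tamarkin--Calaque--Willwacher identification of Theorem~\ref{Calaque-Willwacher-thm}, this unit sends $x^{m+1}$ to the $(m+1)$-fold Schouten product of the universal Poisson bivector, which at the operadic level is precisely the antisymmetrization of the composite $(\dots(\mu_{m+1} \circ_{m+1} l) \dots \circ_1 l) \in \PP_1(2m+2)$.

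Imposing the coequalizer relation $H^0(d_0)(Y) = H^0(d_1)(Y)$ then forces this antisymmetrization to vanish, yielding the stated presentation of $H^0(\PP_1^{\le m})$. The main obstacle is the middle paragraph, namely ruling out extra degree-$0$ relations in $H^0(\F_\lie(L_m^{(1)}[1]))$ beyond those already present in $\PP_1$, and showing that the higher simplicial terms $\F_\lie(L_m^{(i)}[1])$ for $i \ge 2$ contribute only relations that are consequences of this primary one. Both should follow from a careful bookkeeping of the bar-cobar generators of $\widetilde{L_m^{(i)}}[1]$ by weight and cohomological degree, together with the fact that $C_m^{(\bullet)} \to C_{\le m}$ is a genuine simplicial resolution of commutative algebras.
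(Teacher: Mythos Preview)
Your approach is essentially the same as the paper's. The paper also passes through the simplicial resolution, identifies $H^0$ of the zeroth term with $\PP_1$ via Proposition~\ref{poisson-operad}, and then argues that $H^0(\PP_1^{\le m})$ is the quotient of $\PP_1$ by the ideal coming from $y \mapsto x^{m+1}$; it is simply terser, not bothering to describe $H^0(\F_\lie(L_m^{(1)}[1]))$ as an operad but only tracking which generators of $\f_\lie(\widetilde{L_m^{(1)}})$ land nontrivially in degree~$0$ of $\PP_1$. One small point: your appeal to Theorem~\ref{Calaque-Willwacher-thm} for identifying $x^{m+1}$ with the antisymmetrized operation is slightly off, since that theorem is specific to $\O = \en_A$; the paper instead unwinds the commutative product on $\pol(\PP_1)$ directly from the brace description (Example~\ref{xsquared} carries this out for $m=1$). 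Your flagging of the ``main obstacle'' is honest---the paper's proof is equally informal on exactly that point.
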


\begin{proof}
First, let us note that, by the virtue of Proposition \ref{poisson-operad}, there is a quasi-isomorphism
\begin{equation*}
\f_\lie(\widetilde{L_m^{(0)}}) \longrightarrow \PP_1 \:.
\end{equation*}
Therefore, $H^0(\PP_1^{\le m})$ is isomorphic to factor of $\PP_1$ by the image of the composite
\begin{equation*}
\f_\lie(\widetilde{L_m^{(1)}}) \stackrel{d}{\longrightarrow} \f_\lie(\widetilde{L_m^{(0)}}) \longrightarrow \PP_1 \:.
\end{equation*}
That image is an operadic ideal generated by the operation of $\PP_1$ that corresponds to element $x^{m+1} \in C^{(0)}_m \subset k[x]$.
Noting that $x$ itself corresponds to the Lie bracket $l \in \PP_1(2)$ and unwinding the definition of the 
product on $\pol(\PP_1)$, we get the claimed description of the ideal; in Example \ref{xsquared}, we will perform this calculation in 
the case where $m = 1$.
\end{proof}

\begin{example}\label{xsquared}
In this example, we will present a special case of the calculation we skipped in the proof of Proposition \ref{model-classical-operad}.
For degree reasons, the composite
\begin{equation*}
\widetilde{L_m^{(0)}} \longrightarrow \pol(\f_\lie(\widetilde{L_m^{(0)}}))[1] \longrightarrow \pol(\PP_1)[1]
\end{equation*}
factors through the projection $\widetilde{L_m^{(0)}} \longrightarrow k(2)[-1]$. The generator of $k(2)[-1]$ is sent into
a map
\begin{equation*}
\ccomm\{1\}(2) \otimes_{S_2} \bigoplus_{i_1, i_2} \mathrm{Ind}^{S_{i_1 + i_2}}_{S_{i_1} \times S_{i_2}} \colie\{1\}(i_1) \otimes_k \colie\{1\}(i_2) \longrightarrow \bigoplus_{i_1, i_2} \PP_1(i_1 + i_2)[1]
\end{equation*}
that maps $\Delta_2 \in \ccomm\{1\}(2)$ into the Lie bracket $l \in \PP_1(2)$ and everything else to zero.
In agreement with the notation from the proof of Proposition \ref{model-classical-operad}, we denote this map by $x$.
Here we would like to compute $x^2 \in \pol(\PP_1)$. By the definitions from Section \ref{sec2},
\begin{equation*}
x^2 = \:\begin{tikzpicture}[baseline = -5]
\node[ext] (v2) at (-1,0.5) {$1$};
\node[ext] (v3) at (1,0.5) {$2$};
\node (v0) at (0,-0.5) {};
\node[int] (v1) at (0,0) {};
\draw (v0) edge (v1);
\draw (v2) edge (v1);
\draw (v3) edge (v1);
\end{tikzpicture} (x, x)  = \:\begin{tikzpicture}[baseline = -5]
\node[ext] (v2) at (-1,0.5) {$2$};
\node[ext] (v3) at (1,0.5) {$3$};
\node (v0) at (0,-0.5) {};
\node[ext] (v1) at (0,0) {$1$};
\draw (v0) edge (v1);
\draw (v2) edge (v1);
\draw (v3) edge (v1);
\end{tikzpicture} (\widetilde{\mu}, x, x) \:,
\end{equation*}
where $\widetilde{\mu}$ is the composite
\begin{equation*}
\ccomm\{1\}(2) \stackrel{(s^{-1}\delta) * -}{\longrightarrow} \colie\{1\}(2)[-1] \cong \comm(2) \hookrightarrow \PP_1(2) \:,
\end{equation*}
it sends $\Delta_2$ to the commutative product $\mu_2 \in \PP_1(2)$. (Note that this map is not a map of $S_2$-representations and it
does not have to be. ) By the composition law of the preLie operad,
\begin{equation*}
\begin{split}
\begin{tikzpicture}[baseline = -5]
\node[ext] (v2) at (-1,0.5) {$2$};
\node[ext] (v3) at (1,0.5) {$3$};
\node (v0) at (0,-0.5) {};
\node[ext] (v1) at (0,0) {$1$};
\draw (v0) edge (v1);
\draw (v2) edge (v1);
\draw (v3) edge (v1);
\end{tikzpicture} (\widetilde{\mu}, x, x) &= \: (\begin{tikzpicture}[baseline = -5]
\node[ext] (v1) at (0,-0.5) {$1$};
\node[ext] (v2) at (0,0.5) {$2$};
\node (v3) at (0, -1) {};
\draw (v1) edge (v2);
\draw (v1) edge (v3);
\end{tikzpicture}
\: \circ_1 \:
\begin{tikzpicture}[baseline = -5]
\node[ext] (v1) at (0,-0.5) {$1$};
\node[ext] (v2) at (0,0.5) {$2$};
\node (v3) at (0, -1) {};
\draw (v1) edge (v2);
\draw (v1) edge (v3);
\end{tikzpicture} \:-\: \begin{tikzpicture}[baseline = -5]
\node[ext] (v1) at (0,-0.5) {$1$};
\node[ext] (v2) at (0,0.5) {$2$};
\node (v3) at (0, -1) {};
\draw (v1) edge (v2);
\draw (v1) edge (v3);
\end{tikzpicture}
\: \circ_2 \:
\begin{tikzpicture}[baseline = -5]
\node[ext] (v1) at (0,-0.5) {$1$};
\node[ext] (v2) at (0,0.5) {$2$};
\node (v3) at (0, -1) {};
\draw (v1) edge (v2);
\draw (v1) edge (v3);
\end{tikzpicture}) (\widetilde{\mu}, x, x) = \\
&= (\widetilde{\mu} \star x) \star x - \widetilde{\mu} \star (x \star x) \:.
\end{split}
\end{equation*}
It follows that $x^2 \in \pol(\PP_1)$ maps everything to zero except $\Delta_4 \in \ccomm\{1\}(4)$.
Let us now compute $x^2(\Delta_4)$:
\begin{equation*}
\begin{split}
((\widetilde{\mu} \star x) \star x) (\Delta_4) &= \sum_{\s \in \sh(2,2)} (-1)^\s ( (\widetilde{\mu} \star x) (\Delta_3) \circ_1 l )^{\s^{-1}} = \\
&= \sum_{\s \in \sh(2,2)} (-1)^\s ( (\mu_2 \circ l) \circ_1 l + (\mu_2 \circ_1 l)^{(231)} \circ_1 l + (\mu_2 \circ_1 l)^{(312)} \circ_1 l )^{\s^{-1}} = \\
&= \sum_{\s\in\sh(2,2)} (-1)^\s ((\mu_2 \circ_1 l)^{(231)} \circ_1 l)^{\s^{-1}} + R \:; \\
R\quad &= \sum_{\s \in \sh(2,2)} (-1)^\s ( (\mu_2 \circ l) \circ_1 l + (\mu_2 \circ_1 l)^{(312)} \circ_1 l )^{\s^{-1}} =\\
&= \sum_{\s \in \sh(2,2)} (-1)^\s ( (\mu_2 \circ l) \circ_1 l - ((\mu_2 \circ_1 l) \circ_1 l)^{(1243)} )^{\s^{-1}} = \\
=((\mu_2 \circ_1 l) \circ_1 l) - ((\mu_2 \circ_1 l) \circ_1 l)^{(1243)} &- ((\mu_2 \circ_1 l) \circ_1 l)^{(1324)} + ((\mu_2 \circ_1 l) \circ_1 l)^{(1342)} + \\
+ ((\mu_2 \circ_1 l) \circ_1 l)^{(1423)} - ((\mu_2 \circ_1 l) \circ_1 l)^{(1432)} &- ((\mu_2 \circ_1 l) \circ_1 l)^{(2413)} + ((\mu_2 \circ_1 l) \circ_1 l)^{(2431)} + \\
+ ((\mu_2 \circ_1 l) \circ_1 l)^{(3412)} - ((\mu_2 \circ_1 l) \circ_1 l)^{(3421)} &+ ((\mu_2 \circ_1 l) \circ_1 l)^{(2314)} - ((\mu_2 \circ_1 l) \circ_1 l)^{(2341)} = \\
= ((\mu_2 \circ_1 l) \circ_1 l) &+ ((\mu_2 \circ_1 l) \circ_1 l)^{(2341)} + ((\mu_2 \circ_1 l) \circ_1 l)^{(3124)} - \\
- ((\mu_2 \circ_1 l) \circ_1 l)^{(2341)} &- ((\mu_2 \circ_1 l) \circ_1 l)^{(3421)} - ((\mu_2 \circ_1 l) \circ_1 l)^{(4231)} + \\
+ ((\mu_2 \circ_1 l) \circ_1 l)^{(3412)} &+ ((\mu_2 \circ_1 l) \circ_1 l)^{(4132)} + ((\mu_2 \circ_1 l) \circ_1 l)^{(1342)} - \\
- ((\mu_2 \circ_1 l) \circ_1 l)^{(4123)} &- ((\mu_2 \circ_1 l) \circ_1 l)^{(1243)} - ((\mu_2 \circ_1 l) \circ_1 l)^{(2413)} = \\
= ( (\mu_2 \circ_1 (l \circ_1 l)) &+ (\mu_2 \circ_1 (l \circ_1 l)^{(231)}) + (\mu_2 \circ_1 (l \circ_1 l)^{(312)}) ) - \\
- ( (\mu_2 \circ_1 (l \circ_1 l)) &+ (\mu_2 \circ_1 (l \circ_1 l)^{(231)}) + (\mu_2 \circ_1 (l \circ_1 l)^{(312)}) )^{(2341)} + \\
+ ( (\mu_2 \circ_1 (l \circ_1 l)) &+ (\mu_2 \circ_1 (l \circ_1 l)^{(231)}) + (\mu_2 \circ_1 (l \circ_1 l)^{(312)}) )^{(3412)} - \\
- ( (\mu_2 \circ_1 (l \circ_1 l)) &+ (\mu_2 \circ_1 (l \circ_1 l)^{(231)}) + (\mu_2 \circ_1 (l \circ_1 l)^{(312)}) )^{(4123)} = \\
= (\mu_2 \circ_1 ((x \star x)(\Delta_3))) &- (\mu_2 \circ_1 ((x \star x)(\Delta_3)))^{(2341)} + \\
+ (\mu_2 \circ_1 ((x \star x)(\Delta_3)))^{(3412)} &- (\mu_2 \circ_1 ((x \star x)(\Delta_3)))^{(4123)} = \\
&= (\widetilde{\mu} \star (x\star x)) (\Delta_4) \:.
\end{split}
\end{equation*}
This computation shows that
\begin{equation*}
x^2(\Delta_4) = \sum_{\s\in\sh(2,2)} (-1)^\s ((\mu_2 \circ_1 l)^{(231)} \circ_1 l)^{\s^{-1}} = \sum_{\s\in\sh(2,2)} (-1)^\s ((\mu_2 \circ_2 l) \circ_1 l)^{\s^{-1}} \:,
\end{equation*}
which agrees with the claim of Proposition \ref{model-classical-operad}.
\end{example}



\section{Derived degeneracy loci}\label{sec5}

This section, whose content consists of a few simple observations about the derived degeneracy loci and their classical counterparts,
concludes the paper.

\begin{lemma}\label{loci-class}
Classical induction along the map $\PP_1 \longrightarrow H^0(\PP_1^{\le m})$ sends a discrete Poisson algebra to its classical
$m$-degeneracy locus, see Definition \ref{def1} for the definition of the latter.
\end{lemma}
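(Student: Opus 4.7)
My plan is to combine the explicit presentation of $H^0(\PP_1^{\le m})$ provided by Proposition \ref{model-classical-operad} with a direct identification of the generating operation with a contraction of the polyvector $\pi_A^{m+1}$.

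First, by Proposition \ref{model-classical-operad}, $H^0(\PP_1^{\le m})$ is the quotient of $\PP_1$ by the operadic ideal generated by the single operation $R \in \PP_1(2m+2)$ given by the anti-symmetrization of $(\ldots(\mu_{m+1}\circ_{m+1}l)\ldots\circ_1 l)$. Classical induction along a surjection of operads is the quotient by the corresponding ideal of relations, so for a discrete Poisson algebra $A$ the induction $A \otimes_{\PP_1} H^0(\PP_1^{\le m})$ is $A/J$, where $J\subset A$ is the ordinary ideal generated by all values $R(a_1,\ldots,a_{2m+2})$, $a_i\in A$.

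Next I would unpack the generating relation. The un-antisymmetrized operation $(\ldots(\mu_{m+1}\circ_{m+1}l)\ldots\circ_1 l)$ evaluates on a tuple to the product $\{a_1,a_2\}\{a_3,a_4\}\cdots\{a_{2m+1},a_{2m+2}\}$, so
\begin{equation*}
R(a_1,\ldots,a_{2m+2}) \;=\; \sum_{\sigma\in S_{2m+2}} \sgn(\sigma)\,\{a_{\sigma(1)},a_{\sigma(2)}\}\{a_{\sigma(3)},a_{\sigma(4)}\}\cdots\{a_{\sigma(2m+1)},a_{\sigma(2m+2)}\}.
\end{equation*}
A direct computation with the definition of the $(m+1)$-fold wedge product in $\La^{2m+2}T(\spec(A))$, using the identity $\pi_A^\sharp(da)=\{a,-\}$, identifies this expression, up to the combinatorial factor $2^{m+1}(m+1)!$, with the contraction $\pi_A^{m+1}(da_1,\ldots,da_{2m+2})$. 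This combinatorial verification is where the only real computational content of the proof lies, and I expect it to be a routine rearrangement of the shuffle sum defining $\pi\wedge\cdots\wedge\pi$.

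Finally, since $\Om^1_{A/k}$ is generated as an $A$-module by exact differentials, the ideal $J$ coincides with the $A$-module generated by all contractions $\pi_A^{m+1}(\om_1,\ldots,\om_{2m+2})$, $\om_i\in\Om^1_{A/k}$. By Definition \ref{def1}, this is precisely the defining ideal of the scheme-theoretic vanishing locus of the section $\pi_A^{m+1}$, i.e.\ the classical $m$-degeneracy locus $D_m(\spec(A),\pi_A)$. This identifies $A/J$ with the coordinate ring of $D_m(\spec(A),\pi_A)$, which is the required statement; the rest of the argument is formal bookkeeping on top of Proposition \ref{model-classical-operad} and Definition \ref{def1}.
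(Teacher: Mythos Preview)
Your proposal is correct and follows essentially the same approach as the paper: invoke Proposition \ref{model-classical-operad} to present $H^0(\PP_1^{\le m})$ as $\PP_1/I^{(m+1)}$, identify classical induction along a quotient of operads with the quotient of $A$ by the ideal generated by the values of operations in $I^{(m+1)}$, and then recognize that ideal as the one cutting out $D_m$. The paper's proof is terser---it simply asserts that the resulting ideal is ``generated by the $(m+1)$-th skew power of the Poisson bivector''---whereas you spell out the identification of $R(a_1,\ldots,a_{2m+2})$ with a contraction of $\pi_A^{m+1}$ and the passage from exact differentials to arbitrary $1$-forms; this extra detail is fine and fills in what the paper leaves implicit.
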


\begin{proof}
Recall from Proposition \ref{model-classical-operad} that $H^0(\PP_1^{\le m})$ is a factor of $\PP_1$ by an operadic ideal, which we 
denote $I^{(m+1)} \subset \PP_1$. In such a situation, classical induction along the factor map
\begin{equation*}
\PP_1 \longrightarrow \PP_1 / I^{(m+1)} \cong H^0(\PP_1^{\le m})
\end{equation*}
sends a discrete $\PP_1$-algebra $B$ into a factor algebra $B/I^{(m+1)}_B$, where by $I^{(m+1)}_B$ we denote an ideal in $B$ defined
as the image of the composite
\begin{equation*}
I^{(m+1)} (B) \hookrightarrow \PP_1(B) \longrightarrow B \:.
\end{equation*}
Recalling the description of $I^{(m+1)} \subset \PP_1$ from Proposition \ref{model-classical-operad}, we see that
$I^{(m+1)}_B \subset B$ is the ideal generated by $(m+1)$-th skew power of the Poisson bivector.
\end{proof}

\begin{definition}\label{der-degen-loci-def}
Let $A \in \Alg_{\PP_1}$ be a Poisson algebra. Its $m$-th derived degeneracy locus $\T^{\le m}(A)$ is defined as the derived induction
$\Ind^{\PP_1^{\le m}}_{\PP_1}(A) \in \Alg_{\PP_1^{\le m}}$ of $A$ along the map $\PP_1 \longrightarrow \PP_1^{\le m}$;
the operad $\PP_1^{\le m}$ and the map $\PP_1 \longrightarrow \PP_1^{\le m}$ were defined in Definition \ref{main-def}.
\end{definition}

\begin{proposition}\label{der-degen-loci-prop}
Let $A \in \Alg_{\PP_1}$ be a connective, i.e. sitting in non-positive cohomological degrees, Poisson algebra. Then its degeneracy locus
$\T^{\le m}(A)$ is also connective, and the canonical map $A \longrightarrow \T^{\le m}(A)$ induces an isomorphism
\begin{equation}\label{loci-nill-iso}
\ind^{H^0(\PP_1^{\le m})}_{\PP_1}(H^0(A)) \stackrel{\cong}{\longrightarrow} H^0(\T^{\le m}(A))
\end{equation}
of discrete $H^0(\PP_1^{\le m})$-algebras. In other words, $H^0(\T^{\le m}(A))$ is given by the classical $m$-th degeneracy locus
of $H^0(A)$.
\end{proposition}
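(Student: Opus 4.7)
The plan is to split the proposition into two statements: connectivity of $\T^{\le m}(A)$, and the identification of $H^0(\T^{\le m}(A))$ with $\ind^{H^0(\PP_1^{\le m})}_{\PP_1}(H^0(A))$. The ``in other words'' clause is then immediate from Lemma \ref{loci-class}.

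For connectivity, observe first that the simplicial resolution described in Remark \ref{model-term-operad} presents $\PP_1^{\le m}$ as a geometric realization of quasi-free operads $\f_\lie(\widetilde{L_m^{(i)}}[1])$, each generated over $\comm$ by operations in non-positive cohomological degrees. Hence $\PP_1^{\le m}$ itself is connective. I would then write $\T^{\le m}(A) = \Ind^{\PP_1^{\le m}}_{\PP_1}(A)$ as the realization of the two-sided bar construction $B_\bullet(\PP_1^{\le m}, \PP_1, A)$ with $B_n = \PP_1^{\le m} \circ \PP_1^{\circ n} \circ A$. Every level is a composition of connective symmetric sequences with a connective complex, hence connective; the realization is therefore connective as well.

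For the $H^0$ statement, I would apply $H^0$ to the same bar construction. Since $B_\bullet$ is a simplicial object in connective complexes, $H^0$ commutes with its geometric realization, so $H^0(\T^{\le m}(A))$ is the coequalizer of $H^0(B_1) \rightrightarrows H^0(B_0)$. The operad $\PP_1$ is concentrated in degree zero, so $H^0(\PP_1) = \PP_1$, and the operadic composition product of connective symmetric sequences commutes with $H^0$ (using that in characteristic zero, direct sums, $S_n$-coinvariants, and tensor products over $k$ are exact on connective complexes). Consequently, the coequalizer is exactly the one defining the classical induction, giving
\[
H^0(\T^{\le m}(A)) \cong H^0(\PP_1^{\le m}) \circ_{\PP_1} H^0(A) = \ind^{H^0(\PP_1^{\le m})}_{\PP_1}(H^0(A)),
\]
which is the isomorphism (\ref{loci-nill-iso}).

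The main obstacle is the technical verification that $H^0$ commutes with the operadic composition product on connective symmetric sequences: concretely, that the natural map $H^0(M \circ N) \to H^0(M) \circ H^0(N)$ is an isomorphism for connective $M$ and $N$. Once this is granted, the rest is bookkeeping with bar constructions together with the standard fact that $H^0$ commutes with geometric realizations of connective simplicial objects, and with checking that the comparison map $A \to \T^{\le m}(A)$ induces, on $H^0$, the classical unit of the $(\ind, \oblv)$ adjunction for the factor map $\PP_1 \to H^0(\PP_1^{\le m})$.
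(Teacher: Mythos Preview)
Your proof is correct, but the paper's own argument is quite different and worth knowing. You compute $\T^{\le m}(A)$ via the two-sided bar construction $B_\bullet(\PP_1^{\le m},\PP_1,A)$ and then show by hand that $H^0$ passes through each operation (geometric realization, composition product, coinvariants). The paper instead argues purely by adjunction: it writes down the commutative square of \emph{right} adjoints
\[
\xymatrix{
{^{\mathrm{cl}}\alg_{H^0(\PP_1^{\le m})}} \ar[r]\ar[d] & \Alg^{\le 0}_{\PP_1^{\le m}} \ar[d] \\
{^{\mathrm{cl}}\alg_{\PP_1}} \ar[r] & \Alg^{\le 0}_{\PP_1}
}
\]
(all arrows forgetful or inclusion of discrete objects), notes that it commutes on the nose, and then passes to left adjoints. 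The left adjoint of each inclusion of discrete objects is $H^0$, the left adjoint of the right vertical is $\Ind^{\PP_1^{\le m}}_{\PP_1}$ (which lands in connective objects by part one), and the left adjoint of the left vertical is the classical $\ind^{H^0(\PP_1^{\le m})}_{\PP_1}$. Commutativity of left adjoints is exactly the isomorphism~(\ref{loci-nill-iso}).

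What each approach buys: the paper's argument is shorter and entirely formal---once you know $\PP_1^{\le m}$ is connective, there is nothing to compute, and the technical lemma you flagged about $H^0$ commuting with $\circ$ is never needed. Your approach, on the other hand, makes the mechanism visible and does not rely on the reader being comfortable with the ``pass to left adjoints'' maneuver in an $\infty$-categorical setting; it also gives you a concrete handle on $\T^{\le m}(A)$ that could be useful for further computations. The technical point you identified (Künneth plus exactness of $S_n$-coinvariants in characteristic zero) is genuine but routine, so your proof goes through.
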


\begin{proof}
First, the fact that the functor $\Ind^{\PP_1^{\le m}}_{\PP_1}$ preserves connective objects follows from the fact that $\PP_1^{\le m}$ 
sits in non-positive cohomological degrees.

Let us denote by $^\text{cl}\alg_{\PP_1}$ the $1$-category of discrete Poisson algebras. Similarly, we denote by
$^\text{cl}\alg_{H^0(\PP_1^{\le m})}$ the $1$-category of discrete algebras over the operad $H^0(\PP_1^{\le m})$. There is a
commutative diagram
\[\xymatrix{
{^\text{cl}\alg_{H^0(\PP_1^{\le m})}} \ar[r]\ar[d] & \Alg^{\le 0}_{\PP^{\le m}_1} \ar[r] & \Alg^{\le 0}_{\PP_1} \\
{^\text{cl}\alg_{\PP_1}} \ar[rr] && \Alg^{\le 0}_{\PP_1} \ar@{}[u]|\inveq &,
}\]
whose arrows are the forgetful functors. Passing to left adjoints and using Lemma \ref{loci-class} to describe left adjoint to the 
left vertical arrow, we obtain a commutative diagram, whose commutativity is given by an isomorphism of the form (\ref{loci-nill-iso}).
\end{proof}



\bibliographystyle{IEEEtran}
\bibliography{refs}{}

\end{document}